\crefname{subsection}{Subsection}{Subsections}
\newcommand{\N}{\mathbb{N}}
\newcommand{\Z}{\mathbb{Z}}
\newcommand{\R}{\mathbb{R}}
\newcommand{\C}{\mathbb{C}}
\DeclareMathOperator*{\esssup}{ess\,sup}
\newcommand{\abs}[1]{\left| #1 \right|}
\newcommand{\meas}[1]{\left| #1 \right|}
\newcommand{\charFunction}[1]{\chi_{#1}}
\newcommand{\innerproduct}[2]{\left\langle #1, #2 \right\rangle}
\newcommand{\innerproductOmega}[2]{\innerproduct{#1}{#2}_{L^{2}(\Omega)}}
\newcommand{\complexConjugate}[1]{\overline{#1}}
\newcommand{\LpnormOver}[3]{\lVert #3 \rVert_{L^{#1}(#2)}}
\newcommand{\lpnormOver}[3]{\lVert #3\rVert_{\ell^{#1}(#2)}}
\newcommand{\operatorNorm}[1]{\lVert#1\rVert_{\mathcal{L}(L^{p}(\Omega),L^{q}(\Omega))}}
\newcommand{\adjOperatorNorm}[1]{\lVert#1\rVert_{\mathcal{L}(L^{q'}(\Omega),L^{p'}(\Omega))}}
\newcommand{\adj}[1]{#1^{*}}
\newcommand{\operatorNormLpLq}[3]{\lVert#3\rVert_{\mathcal{L}(L^{#1}(\Omega),L^{#2}(\Omega))}}
\newcommand{\normIn}[2]{\lVert #1 \rVert_{#2}}
\newcommand{\contLinLpLq}[2]{\mathcal{L}(L^{#1}(\Omega),L^{#2}(\Omega))}
\newcommand{\transp}[1]{#1^{\mathrm{T}}}
\newcommand{\invtransp}[1]{#1^{-\mathrm{T}}}
\newcommand{\intOver}[2]{\int_{#1} #2 \, \mathrm{d}x}
\newcommand{\intOverVar}[3]{\int_{#1} #2 \, \mathrm{d}#3}
\newcommand{\dualLat}[1]{#1^{\perp}}
\newcommand{\dualLatMat}[1]{#1^{\perp}}
\newcommand{\homCo}[1]{\mathbf{#1}}
\newcommand{\dualLatSpace}{\mathbb{H}}
\newcommand{\embRealHyperpl}{\R_{H}^{d+1}}
\newcommand{\embAdLat}{\Z_{H}^{d+1}}
\newcommand{\FourierCoef}[1]{\widehat{#1}}
\newcommand{\FTFund}{\mathcal{F}_{\Omega}}
\newcommand{\FTTor}{\mathcal{F}_{\mathbb{T}^{d}}}
\newcommand{\torus}{\mathbb{T}^{d}}
\newcommand{\torusDimOne}{\mathbb{T}}
\newcommand{\schwartzSpace}{\mathcal{S}}
\newtheorem{theorem}{Theorem}[section]
\theoremstyle{definition}
\newtheorem{definition}[theorem]{Definition}
\newtheorem{example}[theorem]{Example}
\theoremstyle{remark}
\newtheorem{remark}[theorem]{Remark}
\numberwithin{equation}{section}
\DeclareFontFamily{U}{mathx}{\hyphenchar\font45}
\DeclareFontShape{U}{mathx}{m}{n}{
      <5> <6> <7> <8> <9> <10>
      <10.95> <12> <14.4> <17.28> <20.74> <24.88>
      mathx10
      }{}
\DeclareSymbolFont{mathx}{U}{mathx}{m}{n}
\DeclareMathAccent{\widecheck}{0}{mathx}{"71}
\DeclareMathAccent{\wideparen}{0}{mathx}{"75}
\begin{document}
\setcounter{page}{1}

\title[Boundedness of Fourier multipliers on Fundamental domains]{\(L^{p}\)-\(L^{q}\) boundedness of Fourier multipliers on Fundamental domains of Lattices in $\mathbb{R}^d$}

\author[A. Hendrickx]{Arne Hendrickx \orcidlink{0000-0002-4537-8627}}
\address{
 Arne Hendrickx
  \endgraf
  Department of Mathematics: Analysis, Logic and Discrete Mathematics
  \endgraf
  Ghent University, Belgium
  \endgraf
  {\it E-mail address:} {\rm arnhendr.Hendrickx@UGent.be}
  }

\subjclass[2020]{Primary 43A75, 43A22 {; Secondary 43A15}.}

\keywords{Fourier multipliers, fundamental domains, Paley's inequality, Hardy-Littlewood inequality, Hausdorff-Young-Paley inequality, $L^p$-$L^q$-estimates.}

\begin{abstract}
    In this paper we study the \(L^{p}\)-\(L^{q}\) boundedness of Fourier multipliers on the fundamental domain of a lattice in \(\R^{d}\) for \(1 < p,q < \infty\) under the classical Hörmander condition. First, we introduce Fourier analysis on lattices and have a look at possible generalisations. We then prove the Hausdorff-Young inequality, Paley's inequality and the Hausdorff-Young-Paley inequality in the context of lattices. This amounts to a quantitative version of the \(L^{p}\)-\(L^{q}\) boundedness of Fourier multipliers. Moreover, the Paley inequality allows us to prove the Hardy-Littlewood inequality.
\end{abstract}

\maketitle

\tableofcontents

\section{Introduction}

The main goal of this paper is to prove the \(L^{p}\)-\(L^{q}\) boundedness of Fourier multipliers on a fundamental domain \(\Omega\) of a lattice \(L\) in \(\R^{d}\). The Fourier analysis on lattices is very similar to the toroidal case, which corresponds to the theory of the usual Fourier series. In fact, lattices and their fundamental domains can be viewed as linear deformations of the Euclidean lattice \(\Z^{d}\) and the torus \(\torus\) (which can be viewed as a fundamental domain of \(\Z^{d}\)), respectively. Since we start with the basics, this paper may be suitable for anyone new in this field, especially because this paper is essentially self-contained. It may also be interesting to consider this theory from the viewpoint of the more general theory of pseudo-differential operators and Fourier analysis on groups.

This may be the first text on this particular topic. However, pseudo-differential operators on the lattice \(\Z^{d}\) and on the torus \(\torus\) have already been studied in \cite{BKR} and \cite[Chapter 4]{PsiDOAndSymm}, respectively. There are a lot of results available on the \(L^{p}\)-\(L^{q}\) boundedness of Fourier multipliers. A fundamental article on Fourier multipliers on the Euclidean spaces is \cite{Hormander1960} by Lars Hörmander. The results of this article are extended to Fourier multipliers on \(\rm{SU}(2)\) in \cite{ARN1}, on compact homogeneous manifolds in \cite{ARN}, on Riemannian symmetric spaces of the noncompact type in \cite{Anker}, on smooth manifolds in \cite{CKNR}, on compact Lie groups in \cite{Ruzwirth}, on locally compact groups in \cite{AR}, on compact hypergroups in \cite{KR}, to Fourier multipliers associated with the anharmonic oscillator in \cite{CK,CK2} and to Fourier multipliers associated with a generalised $(k,a)$-Fourier transform in \cite{KR1}, just to mention a few of them.

\subsection{Organisation of this paper} In \cref{section:FAFund} we explore Fourier analysis on a lattice \(L\) in \(\R^{d}\). Our first task is to ensure that we have some form of Fourier analysis on a fundamental domain \(\Omega\) of \(L\). This will be the content of \cref{th:FugledeTheorem} of Fuglede, which roughly relates Fourier analysis on a subset \(\Omega \subseteq \R^{d}\) with tilings of \(\R^{d}\) with the lattice \(L\). The notion of the dual lattice \(\dualLat{L}\) shows up here, as this occurs in the explicit description of the orthonormal basis of \(L^{2}(\Omega)\) which enables Fourier analysis on \(\Omega\). Next, we introduce some function spaces, and we construct the Fourier transform \(\FTFund\) of a function \(f \in C^{\infty}(\Omega)\) and look at some properties of it. At the end of this section we invite the reader to extend our theory to the case of spectral sets and to lattices living in proper subspaces of \(\R^{d}\), for which lattices of type \(A_{d}\) can be a guiding example.

In \cref{section:FundIneq} we prove some important inequalities that will help to prove the \(L^{p}\)-\(L^{q}\) boundedness of Fourier multipliers on \(\Omega\). A general result of Hilbert space theory yields Plancherel's identity \eqref{Plancherel} in our context of lattices. We obtain the Hausdorff-Young inequality for \(\FTFund\) by interpolating Plancherel's formula and the estimate \(\lpnormOver{\infty}{\dualLat{L}}{\FourierCoef{f}} \leq \LpnormOver{1}{\Omega}{f}\), where we denote \(\FTFund{f}(\kappa)\) by \(\FourierCoef{f}(\kappa)\).

\Cref{subsection:PaleyType,subsection:HYPIneq} are devoted to a generalisation of some classical results in \cite{Hormander1960} by Hörmander. The first result that we mention is Paley's inequality.
\begin{theorem}[{\cite[Theorem 1.10]{Hormander1960}}]\label{th:HormanderPaley}
    Let \(\varphi > 0\) be a measurable function such that
    \[m\{\xi \in \R^{d} : \varphi(\xi) \geq s\} \leq C/s \quad \text{for all} \quad s > 0,\]
    where \(C > 0\) is some constant and \(m\) is the Lebesgue measure on \(\R^{d}\). Then for any \(1 < p \leq 2\) we have
    \[\left(\intOverVar{\R^{d}}{\abs{\frac{\FourierCoef{f}(\xi)}{\varphi(\xi)}}^{p} \varphi(\xi)^{2}}{\xi}\right)^{\frac{1}{p}} \leq C_{p} \LpnormOver{p}{\R^{d}}{f} \quad \text{for all} \quad f \in L^{p}(\R^{d}),\]
    where \(C_{p} > 0\) only depends on \(p\) and \(C\). Here \(\FourierCoef{f}(\xi)\) denotes the usual Euclidean Fourier transform of \(f\).
\end{theorem}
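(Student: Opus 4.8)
The plan is to recast the asserted inequality as the boundedness of a single sublinear operator between two $L^{p}$-spaces over $\R^{d}$ carrying \emph{different} measures, and then to obtain it by interpolating between two endpoint bounds. Concretely, I would put $\mathrm{d}\mu(\xi) := \varphi(\xi)^{2}\,\mathrm{d}\xi$ and let $T$ be the sublinear map $Tf := \FourierCoef{f}/\varphi$; then the claim is exactly that $T \colon L^{p}(\R^{d},\mathrm{d}x) \to L^{p}(\R^{d},\mathrm{d}\mu)$ is bounded for $1 < p \le 2$, since
\[
\lVert Tf \rVert_{L^{p}(\R^{d},\,\mathrm{d}\mu)}^{p} = \intOverVar{\R^{d}}{\abs{\tfrac{\FourierCoef{f}(\xi)}{\varphi(\xi)}}^{p}\varphi(\xi)^{2}}{\xi}.
\]
I would establish this from the two endpoint estimates below together with the Marcinkiewicz interpolation theorem (in the form that permits a change of measure on the target side).

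For the endpoint $p = 2$ I would simply invoke Plancherel's theorem: the weight cancels, giving $\lVert Tf \rVert_{L^{2}(\R^{d},\,\mathrm{d}\mu)} = \LpnormOver{2}{\R^{d}}{f}$, so $T$ is bounded $L^{2}(\mathrm{d}x)\to L^{2}(\mathrm{d}\mu)$ with absolute constant (this already settles $p=2$). For the endpoint $p=1$ I would prove the weak-type bound $\mu\{\xi : \abs{Tf(\xi)} > \lambda\} \le 2C\,\LpnormOver{1}{\R^{d}}{f}/\lambda$. Since $\abs{\FourierCoef{f}(\xi)} \le \LpnormOver{1}{\R^{d}}{f}$, the superlevel set is contained in $\{\xi : \varphi(\xi) < \LpnormOver{1}{\R^{d}}{f}/\lambda\}$, so it suffices to show $\mu\{\xi : \varphi(\xi) < a\} \le 2Ca$ for all $a>0$. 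This I would get from the layer-cake formula, using $\varphi>0$ and the Hörmander-type hypothesis $m\{\varphi \ge s\}\le C/s$:
\[
\mu\{\varphi < a\} = \intOverVar{\{\varphi < a\}}{\varphi(\xi)^{2}}{\xi} = \int_{0}^{a^{2}} m\bigl(\{\xi : \sqrt{t} < \varphi(\xi) < a\}\bigr)\,\mathrm{d}t \le \int_{0}^{a^{2}} \frac{C}{\sqrt{t}}\,\mathrm{d}t = 2Ca.
\]

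With $T$ of weak type $(1,1)$ and of weak type $(2,2)$ relative to $(\mathrm{d}x,\mathrm{d}\mu)$, Marcinkiewicz interpolation then delivers the strong-type bound $L^{p}(\mathrm{d}x)\to L^{p}(\mathrm{d}\mu)$ for all $1<p<2$ with norm depending only on $p$ and $C$, and together with the direct $p=2$ case this gives the theorem for $1<p\le2$. The main obstacle, such as it is, is organisational rather than deep: one must (i) cite the version of Marcinkiewicz interpolation that allows distinct measures on domain and target (the standard truncation/distribution-function proof applies verbatim since $T$ is sublinear), and (ii) keep track of the harmless difference between $\{\varphi\ge s\}$ and $\{\varphi > s\}$ in the layer-cake step. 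The only computation with any content is the distribution-function estimate $\mu\{\varphi<a\}\le 2Ca$; everything else is soft.
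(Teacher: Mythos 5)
Your argument is correct and is essentially the same proof the paper gives for the lattice analogue of this statement (\cref{th:PaleyType}): the same operator $Tf=\widehat{f}/\varphi$, the same target measure $\mathrm{d}\mu=\varphi^{2}\,\mathrm{d}\xi$, the weak $(2,2)$ bound via Plancherel, the weak $(1,1)$ bound via $|\widehat{f}|\le\|f\|_{L^{1}}$ plus the layer-cake estimate $\mu\{\varphi<a\}\le 2Ca$, and Marcinkiewicz interpolation. (The paper only quotes this Euclidean version from H\"ormander without proof, but your write-up is a faithful continuous-measure version of the discrete argument it does give.)
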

In our situation we consider the measure space \(\dualLat{L}\) endowed with the counting measure. It is natural to consider this measure, because the discrete version of integration is summation. The translation of \cref{th:HormanderPaley} in our setting then suggests that
\[\left( \sum_{\kappa \in \dualLat{L}} |\FourierCoef{f}(\kappa)|^{p} \, \varphi(\kappa)^{2-p} \right)^{\frac{1}{p}} \lesssim M_{\varphi}^{\frac{2-p}{p}} \LpnormOver{p}{\Omega}{f} \quad \text{for all} \quad f \in L^{p}(\Omega)\]
under the condition that \(\varphi\) is a positive function on \(\dualLat{L}\) satisfying
\[M_{\varphi} := \sup_{s > 0} s \sum\limits_{\substack{\kappa \in \dualLat{L} \\ \varphi(\kappa) \geq s}} 1 < \infty.\]
Here the notation \(\lesssim\) means that the (weak) inequality holds if we multiply the right-hand side with a certain positive constant depending only on the exponents of the relevant Lebesgue spaces, so in this case the constant only depends on \(p\). The factor \(M_{\varphi}^{\frac{2-p}{p}}\) is written explicitly since this embodies the dependence on \(\varphi\). This Paley-type inequality is proven in \cref{th:PaleyType} in \cref{subsection:PaleyType}.

In \cref{subsection:HYPIneq} we discuss the analogue of Hörmander's classical version of the Hausdorff-Young-Paley inequality, which generalises both the Hausdorff-Young inequality and Paley's inequality.
\begin{theorem}[{\cite[Corollary 1.6]{Hormander1960}}]
    Assume \(\varphi\) satisfies the condition of \cref{th:HormanderPaley}. Let \(1 < p \leq 2\) and \(1 < p \leq b \leq p' < \infty\), where \(p'\) denotes the conjugate exponent of \(p\), i.e.\ \(\frac{1}{p} + \frac{1}{p'} = 1\). Then for all \(f \in L^{p}(\R^{d})\) we have
    \[\left(\intOverVar{\R^{d}}{\abs{\FourierCoef{f}(\xi) \, \varphi(\xi)^{\frac{1}{b} - \frac{1}{p'}}}^{b}}{\xi}\right)^{\frac{1}{b}} \leq C_{p} \LpnormOver{p}{\R^{d}}{f}.\]
\end{theorem}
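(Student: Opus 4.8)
The plan is to deduce this inequality by interpolation from two facts already available for the Euclidean Fourier transform: Paley's inequality (\cref{th:HormanderPaley}), which is the endpoint case $b=p$, and the Hausdorff--Young inequality, which is the endpoint case $b=p'$. The interpolation is performed with the Stein--Weiss theorem (interpolation with change of measure), and the whole point of the statement is that the weight $\varphi^{1/b-1/p'}$ is exactly the one produced by that interpolation.

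First I would pin down the two endpoints. For $b=p$ the exponent of $\varphi$ equals $p\bigl(\tfrac1p-\tfrac1{p'}\bigr)=1-\tfrac{p}{p'}=2-p$, so the asserted inequality reads
\[\Bigl(\intOverVar{\R^{d}}{\abs{\FourierCoef{f}(\xi)}^{p}\,\varphi(\xi)^{2-p}}{\xi}\Bigr)^{1/p}\le C_{p}\,\LpnormOver{p}{\R^{d}}{f},\]
which is \cref{th:HormanderPaley} after rewriting $\abs{\FourierCoef{f}/\varphi}^{p}\varphi^{2}=\abs{\FourierCoef{f}}^{p}\varphi^{2-p}$. For $b=p'$ the exponent of $\varphi$ vanishes and the inequality becomes the Hausdorff--Young inequality $\LpnormOver{p'}{\R^{d}}{\FourierCoef{f}}\le\LpnormOver{p}{\R^{d}}{f}$, which holds because $1<p\le 2$.

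Next I would set up the interpolation. Regard $Tf:=\FourierCoef{f}$ as a linear operator defined on Schwartz functions and extended by density; by the previous step $T$ maps $L^{p}(\R^{d})$ (with Lebesgue measure on both ends) boundedly into $L^{p}(\R^{d},\varphi^{2-p}\,\mathrm{d}\xi)$ with norm $\le C_{p}$ and into $L^{p'}(\R^{d},\mathrm{d}\xi)$ with norm $\le 1$. For $\theta\in(0,1)$ define $b$ by $\tfrac1b=\tfrac{1-\theta}{p}+\tfrac{\theta}{p'}$; as $\theta$ runs over $(0,1)$, $b$ runs over $(p,p')$, so together with the two endpoints this exhausts the hypothesis $1<p\le b\le p'$. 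The Stein--Weiss theorem then gives that $T$ maps $L^{p}(\R^{d})$ boundedly into $L^{b}(\R^{d},v\,\mathrm{d}\xi)$, where $v=(\varphi^{2-p})^{b(1-\theta)/p}=\varphi^{(2-p)b(1-\theta)/p}$ (the Hausdorff--Young endpoint contributes the trivial weight), with operator norm at most $C_{p}^{1-\theta}\le\max\{1,C_{p}\}$.

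Finally I would verify that $v$ is the desired weight. Multiplying the defining relation of $b$ by $p$ and using $\tfrac{p}{p'}=p-1$ gives $\tfrac{p}{b}=1+(p-2)\theta$, hence $1-\theta=\tfrac{bp-b-p}{b(p-2)}$; substituting, one gets $(2-p)\tfrac{b(1-\theta)}{p}=1-\tfrac{b}{p'}=b\bigl(\tfrac1b-\tfrac1{p'}\bigr)$, so $v=\bigl(\varphi^{1/b-1/p'}\bigr)^{b}$. Consequently
\[\Bigl(\intOverVar{\R^{d}}{\abs{\FourierCoef{f}(\xi)}^{b}\,v(\xi)}{\xi}\Bigr)^{1/b}=\Bigl(\intOverVar{\R^{d}}{\abs{\FourierCoef{f}(\xi)\,\varphi(\xi)^{1/b-1/p'}}^{b}}{\xi}\Bigr)^{1/b},\]
and the asserted bound follows (with $C_{p}$ replaced by the larger constant $\max\{1,C_{p}\}$). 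I do not expect a genuine analytic obstacle here: the two endpoint inequalities are already available and Stein--Weiss interpolation with change of measure is a standard tool, so the only things to watch are the correct matching of interpolation data in each of the three regimes and the exponent identity $(2-p)\tfrac{b(1-\theta)}{p}=1-\tfrac b{p'}$. (One could instead argue directly, following Hörmander's dyadic decomposition of the level sets $\{2^{k}\le\varphi<2^{k+1}\}$, but the interpolation argument is shorter and, after replacing $\R^{d}$ by $\Omega$ and Lebesgue measure by the counting measure on $\dualLat{L}$, carries over verbatim to the lattice setting of this paper.)
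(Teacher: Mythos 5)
Your proposal is correct and follows essentially the same route as the paper's own proof of its lattice analogue (\cref{th:HYP}): Stein--Weiss interpolation with change of measure between the Paley endpoint $b=p$ (weight $\varphi^{2-p}$) and the Hausdorff--Young endpoint $b=p'$ (trivial weight), with the identical exponent computation $w=w_{0}^{b(1-\theta)/p}=\bigl(\varphi^{1/b-1/p'}\bigr)^{b}$. Your closing remark that the argument transfers verbatim to the counting measure on $\dualLat{L}$ is exactly what the paper does.
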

Note that this inequality reduces to the Hausdorff-Young inequality for \(b = p'\) and to Paley's inequality for \(b = p\). We can interpret the Hausdorff-Young-Paley inequality in our context in the same way as for Paley's inequality. The resulting hypothesis is that
\[\left(\sum_{\kappa \in \dualLat{L}} \abs{\FourierCoef{f}(\kappa) \, \varphi(\kappa)^{\frac{1}{b} - \frac{1}{p'}}}^{b}\right)^{\frac{1}{b}} \lesssim M_{\varphi}^{\frac{1}{b} - \frac{1}{p'}} \LpnormOver{p}{\Omega}{f},\]
where \(\varphi\) is a positive function on \(\dualLat{L}\) satisfying
\[M_{\varphi} := \sup_{s > 0} s \sum\limits_{\substack{\kappa \in \dualLat{L} \\ \varphi(\kappa) \geq s}} 1 < \infty.\]
Again, we included the factor \(M_{\varphi}^{\frac{1}{b} - \frac{1}{p'}}\) in the right-hand side of the inequality as it contains the dependence on \(\varphi\). We will show that this assertion is true in \cref{th:HYP}.

In \cref{subsection:HardyLittleIneq} we prove the Hardy-Littlewood inequality as a consequence of the Paley inequality. The original result of Hardy and Littlewood appeared in \cite{HardyLittlewood}.
\begin{theorem}[{\cite[Theorem 5]{HardyLittlewood}}]
    Let \(1 < p \leq 2\). For every \(f \in L^{p}(\torusDimOne)\) we have
    \begin{equation}\label{ineq:HardyLittleOrig}
        \sum_{m = -\infty}^{+\infty} |\FourierCoef{f}(m)|^{p} \, (\abs{m}+1)^{p-2} \lesssim \LpnormOver{p}{\torusDimOne}{f}^{p}.
    \end{equation}
\end{theorem}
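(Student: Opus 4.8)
The plan is to obtain this as a direct corollary of the Paley-type inequality established in \cref{th:PaleyType}, applied to the lattice $L = \Z$ in $\R^{1}$. Recall that that inequality asserts, for $1 < p \leq 2$ and any strictly positive function $\varphi$ on the dual lattice $\dualLat{L}$ with
\[
M_{\varphi} := \sup_{s > 0} s \sum_{\substack{\kappa \in \dualLat{L} \\ \varphi(\kappa) \geq s}} 1 < \infty,
\]
the bound
\[
\left( \sum_{\kappa \in \dualLat{L}} \abs{\FourierCoef{f}(\kappa)}^{p} \, \varphi(\kappa)^{2-p} \right)^{\frac{1}{p}} \lesssim M_{\varphi}^{\frac{2-p}{p}} \LpnormOver{p}{\Omega}{f}, \qquad f \in L^{p}(\Omega),
\]
with implicit constant depending only on $p$.

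First I would specialise to $d = 1$ and $L = \Z$. Then a fundamental domain $\Omega$ is identified with $\torusDimOne$, the transform $\FTFund$ becomes the usual assignment of Fourier coefficients $\FourierCoef{f}(m)$, $m \in \Z$, and $\dualLat{\Z}$ is identified with $\Z$ according to the normalisation fixed in \cref{section:FAFund}. Next I would choose the weight $\varphi(m) := (\abs{m} + 1)^{-1}$, $m \in \Z$, which is strictly positive; then $\varphi(m)^{2-p} = (\abs{m}+1)^{-(2-p)} = (\abs{m}+1)^{p-2}$, so that the $p$-th power of the left-hand side of the Paley inequality is precisely the series appearing in \eqref{ineq:HardyLittleOrig}.

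It then remains to check the hypothesis $M_{\varphi} < \infty$, which is an elementary counting estimate: for $s > 0$ the inequality $\varphi(m) \geq s$ is equivalent to $\abs{m} \leq \tfrac{1}{s} - 1$, so the relevant index set is empty when $s > 1$ and contains at most $\tfrac{2}{s} - 1 \leq \tfrac{2}{s}$ elements when $0 < s \leq 1$; hence $s \sum_{\varphi(m) \geq s} 1 \leq 2$ for every $s > 0$, i.e.\ $M_{\varphi} \leq 2$. Applying \cref{th:PaleyType} with this $\varphi$ and raising both sides to the $p$-th power would then give
\[
\sum_{m = -\infty}^{+\infty} \abs{\FourierCoef{f}(m)}^{p} (\abs{m}+1)^{p-2} \lesssim M_{\varphi}^{2-p} \LpnormOver{p}{\torusDimOne}{f}^{p} \leq 2^{2-p} \LpnormOver{p}{\torusDimOne}{f}^{p} \lesssim \LpnormOver{p}{\torusDimOne}{f}^{p},
\]
using that $0 \leq 2 - p < 1$, so that $2^{2-p}$ is an absolute constant and the overall constant depends only on $p$. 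I do not expect a genuine obstacle here: the statement is essentially a reformulation of the Paley-type inequality for $\Z$ with a particular weight, and the only points requiring care are the identifications $\Omega \leftrightarrow \torusDimOne$, $\dualLat{\Z} \leftrightarrow \Z$, and the (routine) verification that $M_{\varphi}$ is finite.
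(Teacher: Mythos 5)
Your proof is correct, and it follows the same underlying strategy that the paper uses for its own Hardy--Littlewood-type result (\cref{th:HardyLittle}): reduce to Paley's inequality (\cref{th:PaleyType}) applied with a reciprocal weight, here $\varphi(m) = (\abs{m}+1)^{-1}$, so that $\varphi(m)^{2-p} = (\abs{m}+1)^{p-2}$. The counting estimate $M_{\varphi} \leq 2$ is verified correctly, and the identifications $\Omega \leftrightarrow \torusDimOne$, $\dualLat{\Z} = \Z$ are unproblematic (the normalisation $\meas{\Omega} = 1$ makes the paper's conventions coincide with the classical ones). One point worth emphasising: you were right \emph{not} to try to deduce the classical statement from the paper's general \cref{th:HardyLittle}, since that theorem requires $\sum_{m} (\abs{m}+1)^{-\beta} < \infty$ with $\beta$ chosen so that the weight $\varphi^{\beta(p-2)}$ equals $(\abs{m}+1)^{p-2}$, forcing $\beta = 1$, for which the series diverges. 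Your direct verification of the weak-type condition $M_{\varphi} < \infty$ bypasses this and shows that the classical inequality sits strictly inside the scope of \cref{th:PaleyType} but outside that of \cref{th:HardyLittle}.
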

We can view \(\torusDimOne\) as the fundamental domain of the one-dimensional lattice \(\Z\), which enables us to interpret this inequality in our setting, where we allow more general weight functions than \((\abs{m}+1)^{p-2}\) in \eqref{ineq:HardyLittleOrig}. Such a weight function \(\varphi\) needs to decay sufficiently fast, which we express by asking that
\[\sum_{\kappa \in \dualLat{L}} \frac{1}{\varphi(\kappa)^{\beta}} < \infty \quad \text{for some} \quad \beta > 0.\]
Under this condition we prove in \cref{th:HardyLittle} that for every \(f \in L^{p}(\Omega)\) we have
\[\left(\sum_{\kappa \in \dualLat{L}} |\FourierCoef{f}(\kappa)|^{p} \, \varphi(\kappa)^{\beta(p-2)}\right)^{\frac{1}{p}} \lesssim_{p,\varphi} \LpnormOver{p}{\Omega}{f}.\]
The hidden constant in this inequality indeed depends on both \(p\) and \(\varphi\), but the dependence on \(\varphi\) can be eliminated by including a similar constant as in the Paley inequality.

Finally, in \cref{section:BoundFourierMult} we prove the \(L^{p}\)-\(L^{q}\) boundedness of Fourier multipliers on \(\Omega\) for \(1 < p,q < \infty\). Hörmander proved in the Euclidean case \cite[Theorem 1.11]{Hormander1960} that a Fourier multiplier with symbol \(\sigma(\xi)\) has a bounded \(L^{p}\)-\(L^{q}\) extension as a consequence of the Hausdorff-Young-Paley inequality, provided that \(1 < b < \infty\), \(1 < p \leq 2 \leq q < \infty\) with \(\frac{1}{p} + \frac{1}{q} = \frac{1}{b}\) and \(\sigma(\xi)\) is a measurable function satisfying
\[m\{\xi \in \R^{d} : \abs{\sigma(\xi)} \geq s\} \leq \frac{C}{s^{b}} \quad \text{for all} \quad s > 0,\]
where \(C > 0\) is some constant. We will prove a similar result in \cref{th:boundednessForPTwo,th:boundedness1}. The fact that \(\Omega\) has finite measure will allow us to obtain also \(L^{p}\)-\(L^{q}\) boundedness results when \(1 < p,q \leq 2\) or \(2 \leq p,q < \infty\) in \cref{th:boundedness2}. The remaining case \(1 < q \leq 2 \leq p < \infty\) will follow from \cref{th:boundedness1} by duality.

\subsection{Notation and conventions}

We follow the convention that \(0 \in \N\).

Throughout this paper \(L\) stands for a lattice in \(\R^{d}\) and \(\Omega\) denotes a fundamental domain of \(L\). These concepts are introduced in \cref{section:FAFund}.

For vectors \(x,y \in \R^{d}\) we write \(x \cdot y = \sum_{j=1}^{d} x_{j} y_{j}\) for the Euclidean inner product and \(\abs{x} = \sqrt{x \cdot x}\) for the Euclidean norm.

Let \(X\) and \(Y\) be normed vector spaces. We denote by \(\mathcal{L}(X,Y)\) the normed vector space of all bounded linear mappings from \(X\) to \(Y\).

If we consider the Lebesgue space \(\ell^{p}(A)\) for some set \(A\) and some \(1 \leq p \leq \infty\), it is always tacitly assumed that the measure on \(A\) is the counting measure. When we sum over the empty set, this is by definition equal to \(0\).

We denote the conjugate exponent of a real number \(1 \leq p \leq \infty\) by \(p'\), i.e.\ \(\frac{1}{p} + \frac{1}{p'} = 1\).

Let \(f \in L^{p}(X,\mu)\) and \(g \in L^{q}(Y,\nu)\). We write \(\normIn{f}{L^{p}(X,\mu)} \lesssim \normIn{g}{L^{q}(Y,\nu)}\) if there exists some constant \(C_{p,q} > 0\) depending only on the exponents of the relevant Lebesgue spaces such that \(\normIn{f}{L^{p}(X,\mu)} \leq C_{p,q} \normIn{g}{L^{q}(Y,\nu)}\). If we want to make this explicit or include other parameters, then we write the parameters as indices, e.g.\ \(\normIn{f}{L^{p}(X,\mu)} \lesssim_{p,\varphi} \normIn{g}{L^{q}(Y,\nu)}\). This notation will also be used in related contexts, such as for norm inequalities for some \(A \in \mathcal{L}(L^{p}(X,\mu),L^{q}(Y,\nu))\).

\section{Fourier analysis on fundamental domains of lattices in \texorpdfstring{$\mathbb{R}^d$}{Rd}} \label{section:FAFund}

In this section we present the basics of Fourier analysis on fundamental domains of lattices in \(\R^{d}\). We refer to \cite{LiXu} and \cite{Fuglede} for more details on this topic.

A lattice \(L\) in \(\R^{d}\) is a discrete subgroup of \(\R^{d}\) spanned by \(d\) linearly independent vectors, which means
\[L = \{k_{1} a_{1} + k_{2} a_{2} + ... + k_{d} a_{d} \mid k_{i} \in \Z\}\]
for some linearly independent column vectors \(a_{1}, \dots, a_{d}\) in \(\R^{d}\). The \(d \times d\)-matrix \(A\) with \(a_{1}, \dots, a_{d}\) as column vectors is called the generator matrix of \(L\) as
\[L = A \Z^{d} = \{Ak \mid k \in \Z^{d}\}.\]
In this case we sometimes denote \(L\) by \(L_{A}\).

A \emph{fundamental set} of a lattice \(L\) in \(\R^{d}\) is a set \(\Omega \subseteq \R^{d}\) such that
\(\Omega + L = \R^{d}\) as a direct sum,
which means that \(\Omega\) contains one representative for every coset of \(\R^{d} / L\). We can also write down this condition by means of the characteristic function \(\charFunction{\Omega}\) of \(\Omega\) as
\[\sum_{\lambda \in L} \charFunction{\Omega}(x+\lambda) = 1 \quad \text{for all } x \in \R^{d}.\]
This expresses that the translated copies of \(\Omega\) do not overlap and do not leave gaps.
A \emph{fundamental domain} of \(L\) is a measurable fundamental set of \(L\). Every lattice \(L = L_{A}\) has a natural fundamental domain, namely the parallelotope
\begin{equation}\label{eq:FundDomParal}
    \Omega_{P} := \left\{\sum_{i=1}^{d} t_{i} a_{i} \mid t = (t_{1}, \dots, t_{d}) \in [0,1)^{d}\right\},
\end{equation}
which is bounded. Remark that a fundamental domain of a lattice \(L\) is clearly not unique.

Now consider any fundamental domain \(\Omega\) of \(L\). We can move every set of the countable decomposition \(\{\Omega \cap (\Omega_{P} + \lambda) \mid \lambda \in L\}\) of \(\Omega\) to \(\Omega_{P}\), and this results in \(\Omega_{P}\) since both \(\Omega\) and \(\Omega_{P}\) are fundamental domains of \(L\). Because of the countable additivity and translation invariance of the Lebesgue measure, we easily see that \(\Omega\) and \(\Omega_{P}\) have the same measure. In particular, since \(\Omega_{P}\) has positive finite measure, this is also the case for every fundamental domain of \(L\).

We are also interested in periodic arrangements of subsets of \(\R^{d}\) determined by the points of a lattice which (almost) fill \(\R^{d}\) under a less restrictive condition than for fundamental domains. This can be expressed by the notion of a tiling. We say that a set \(\Omega \subseteq \R^{d}\) tiles \(\R^{d}\) with the lattice \(L\) if
\[\sum_{\lambda \in L} \charFunction{\Omega}(x + \lambda) = 1 \quad \text{for almost all } x \in \R^{d}.\]
This expresses that the union of all translated copies of the set \(\Omega\) almost fills the whole space \(\R^{d}\) with almost no overlaps. We also write this as \(\Omega + L \approx \R^{d}\).

In order to examine Fourier analysis on a fundamental domain \(\Omega\) of a lattice \(L\) in \(\R^{d}\) we need to construct an appropriate orthonormal basis for \(L^{2}(\Omega)\). We note that we endow \(L^{2}(\Omega)\) with the normalized inner product, namely
\[\innerproductOmega{f}{g} := \frac{1}{\meas{\Omega}} \intOver{\Omega}{f(x) \, \complexConjugate{g(x)}}.\]
Here \(\meas{\Omega}\) denotes the Lebesgue measure of \(\Omega\).

The construction of our orthonormal basis for \(L^{2}(\Omega)\) makes use of the concept of a dual lattice. Given a lattice \(L\) in \(\R^{d}\), the dual lattice \(\dualLat{L}\) is defined by
\[\dualLat{L} := \{\kappa \in \R^{d} \mid \forall \lambda \in L: \kappa \cdot \lambda \in \Z\},\]
where \(\kappa \cdot \lambda\) denotes the usual Euclidean inner product of the vectors \(\kappa, \lambda \in \R^{d}\). The generator matrix of \(\dualLat{L_{A}}\) is \(\dualLatMat{A} := \invtransp{A}\), where \(\transp{A}\) denotes the transpose of the matrix \(A\) and \(\invtransp{A} := \transp{(A^{-1})}\). Note that \(\dualLat{(\dualLat{L})} = L\) for any lattice \(L\).

Surprisingly, it turns out that Fourier analysis on a fundamental domain of a lattice and tilings with this lattice are very closely related as demonstrated by the \emph{Fuglede theorem}.

\begin{theorem}[{Fuglede, \cite[Section 6]{Fuglede}}]\label{th:FugledeTheorem}
Let \(\Omega \subseteq \R^{d}\) be a measurable set with \(0 < \meas{\Omega} < \infty\) and let \(L\) be a lattice in \(\R^{d}\). Then \(\Omega + L \approx \R^{d}\) if and only if \(\{e^{2\pi i \kappa \cdot x} \mid \kappa \in L^{\perp}\}\) is an orthonormal basis for \(L^{2}(\Omega)\).
\end{theorem}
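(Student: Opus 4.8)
The plan is to establish the equivalence by computing, for each $\kappa \in L^\perp$ and each $f \in L^2(\Omega)$, the Fourier-type coefficients $\langle f, e_\kappa \rangle_{L^2(\Omega)}$ where $e_\kappa(x) := e^{2\pi i \kappa \cdot x}$, and to relate completeness and orthonormality of $\{e_\kappa\}_{\kappa \in L^\perp}$ to the tiling condition $\sum_{\lambda \in L} \charFunction{\Omega}(x+\lambda) = 1$ a.e. Two preliminary observations drive everything. First, for $\kappa, \kappa' \in L^\perp$ one has $e_\kappa \complexConjugate{e_{\kappa'}} = e_{\kappa - \kappa'}$ and $\kappa - \kappa' \in L^\perp$, so orthonormality of the family reduces to the single statement that $\frac{1}{\meas{\Omega}}\intOver{\Omega}{e_\mu} = \delta_{\mu, 0}$ for all $\mu \in L^\perp$. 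Second, the Poisson-summation-type heuristic: for $x \in \R^d$, the $L$-periodization $F(x) := \sum_{\lambda \in L} \charFunction{\Omega}(x+\lambda)$ is a bounded $L$-periodic function on $\R^d$, hence descends to a function on the torus $\R^d / L$, whose natural Fourier basis is exactly $\{e_\kappa \mid \kappa \in L^\perp\}$; its Fourier coefficients on $\R^d/L$ are, up to the normalization by $\meas{\Omega}$ (equivalently the covolume of $L$, which equals $\meas{\Omega}$ by the tiling/fundamental-domain argument given earlier in the text), precisely $\intOver{\Omega}{e_{-\kappa}}$. So $F \equiv 1$ a.e.\ if and only if all these integrals vanish except the one for $\kappa = 0$, which gives $\meas{\Omega}$.

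For the forward direction, assume $\Omega + L \approx \R^d$. Unfolding any $g \in L^1(\R^d)$ over the tiles gives $\intOver{\R^d}{g} = \sum_{\lambda \in L} \intOverVar{\Omega - \lambda}{g}{x} = \sum_{\lambda \in L}\intOverVar{\Omega}{g(x-\lambda)}{x}$; applied to suitable $g$ this yields the orthonormality of $\{e_\kappa\}$ (take $g$ built from $e_\mu$ times a Schwartz cutoff, or argue directly via the periodization $F$ above). For completeness, suppose $f \in L^2(\Omega)$ is orthogonal to every $e_\kappa$. Extend $f$ by $L$-periodicity to a locally-$L^2$ function $\tilde f$ on $\R^d$ (well-defined a.e.\ because $\Omega$ tiles): then $\tilde f$ restricted to any fundamental domain, in particular to the parallelotope $\Omega_P$, lies in $L^2(\Omega_P)$, and its Fourier coefficients against $\{e_\kappa \mid \kappa \in L^\perp\}$ — which is the standard orthonormal basis of $L^2(\Omega_P)$ by the classical torus theory (linear change of variables from $\torus$) — all vanish, since these coefficients are computed by the same unfolding integral over $\Omega$ and hence equal $\meas{\Omega}\langle f, e_\kappa\rangle_{L^2(\Omega)} = 0$. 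Therefore $\tilde f = 0$ a.e., so $f = 0$; combined with orthonormality, $\{e_\kappa\}$ is an orthonormal basis of $L^2(\Omega)$.

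For the converse, assume $\{e_\kappa \mid \kappa \in L^\perp\}$ is an orthonormal basis of $L^2(\Omega)$. From orthonormality alone (applied with $\mu = \kappa - \kappa' \in L^\perp$) we get $\intOver{\Omega}{e_\mu} = \meas{\Omega}\,\delta_{\mu,0}$ for all $\mu \in L^\perp$. Now consider the periodization $F(x) = \sum_{\lambda \in L}\charFunction{\Omega}(x+\lambda)$, which is nonnegative, $L$-periodic, and integrable over $\Omega_P$ with $\intOverVar{\Omega_P}{F}{x} = \meas{\Omega}$ (unfolding $\charFunction{\Omega}$). Its Fourier coefficients on $\Omega_P$ against $e_{-\kappa}$ are $\intOverVar{\Omega_P}{F(x) e_{-\kappa}(x)}{x} = \intOver{\Omega}{e_{-\kappa}} = \meas{\Omega}\,\delta_{\kappa,0}$, using $L$-periodicity of $e_{-\kappa}$ to unfold. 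Hence $F$ and the constant function $1$ (note $\meas{\Omega} = \meas{\Omega_P}$) have identical Fourier expansions on $L^2(\Omega_P)$, so $F = 1$ a.e., i.e.\ $\Omega + L \approx \R^d$.

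The main obstacle is the careful handling of the periodization/unfolding map — checking that extending $f \in L^2(\Omega)$ by $L$-periodicity is well-defined almost everywhere (which uses precisely that $\Omega$ is a fundamental domain, so the tiles cover $\R^d$ with no overlap up to measure zero), that the unfolding identity $\intOver{\R^d}{g} = \sum_{\lambda \in L}\intOverVar{\Omega}{g(x-\lambda)}{x}$ holds for $g \in L^1$, and that Fourier coefficients computed on $\Omega$ agree with those computed on $\Omega_P$. Once this dictionary between functions on $\Omega$, $L$-periodic functions on $\R^d$, and functions on $\Omega_P$ is set up cleanly, everything reduces to the classical Fourier theory on the parallelotope $\Omega_P$, which is a linear image of the standard torus $\torus$, together with the elementary identity $\meas{\Omega} = \meas{\Omega_P}$ already noted in the text.
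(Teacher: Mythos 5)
The paper itself offers no proof of this theorem --- it is quoted directly from Fuglede's article --- so there is no internal argument to compare against; I will judge your proposal on its own terms. Your forward direction (tiling implies orthonormal basis) is essentially sound: the unfolding identity, the computation $\int_{\Omega_P} F(x)\,e^{-2\pi i \kappa\cdot x}\,\mathrm{d}x = \int_{\Omega} e^{-2\pi i \kappa\cdot x}\,\mathrm{d}x$ for the periodization $F(x)=\sum_{\lambda\in L}\charFunction{\Omega}(x+\lambda)$, and the periodic-extension argument for completeness all work, modulo the routine verifications you flag yourself.

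The converse, however, has a genuine gap. You compute the Fourier coefficients of $F$ on $\Omega_P$ using \emph{orthonormality alone} and then conclude $F=1$ by comparing with the constant function, justifying the normalization with the parenthetical ``note $\meas{\Omega}=\meas{\Omega_P}$''. But that identity is established in the paper only for fundamental domains, i.e.\ only under the conclusion you are trying to reach; at this stage of the converse, uniqueness of Fourier coefficients yields only that $F$ equals the constant $\meas{\Omega}/\meas{\Omega_P}$ almost everywhere, and since $F$ is integer-valued this constant is some positive integer $c$, not necessarily $1$. Orthonormality genuinely does not suffice here: for $L=\Z$ and $\Omega=[0,2)$ the system $\{e^{2\pi i n x}\mid n\in\Z\}$ is orthonormal in $L^{2}([0,2))$ with the normalized inner product, yet $\sum_{k\in\Z}\charFunction{[0,2)}(x+k)=2$, so the tiling conclusion fails. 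Your converse never invokes completeness, which is precisely what rules out $c\geq 2$. The standard repair (Fuglede's own) is to apply Parseval's identity to the modulated functions $x\mapsto \charFunction{\Omega}(x)\,e^{2\pi i t\cdot x}$, obtaining $\sum_{\kappa\in\dualLat{L}}\bigl|\widehat{\charFunction{\Omega}}(t-\kappa)\bigr|^{2}=\meas{\Omega}^{2}$ for every $t\in\R^{d}$, and then integrate over $t$ in a fundamental domain of $\dualLat{L}$, using the Euclidean Plancherel theorem to deduce $\meas{\Omega}^{2}/\meas{\Omega_P}=\meas{\Omega}$, i.e.\ $c=1$. A smaller issue: $F$ need not belong to $L^{2}(\Omega_P)$ (it is unbounded for suitably wild $\Omega$ of finite measure), so the comparison of Fourier coefficients should be phrased via the uniqueness theorem for $L^{1}$ functions on the torus rather than via $L^{2}$ expansions.
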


The Fuglede theorem thus provides a natural orthonormal basis to work with. This theorem works for any fundamental domain of a lattice \(L\) as it tiles \(\R^{d}\) with \(L\). Moreover, this is in a certain sense the only type of sets that satisfy the tiling condition. By the lemma in \cite[Section 6]{Fuglede} every set \(\Omega \subseteq \R^{d}\) satisfying \(\Omega + L \approx \R^{d}\) differs from a fundamental domain only by a null set.

As a consequence of the Fuglede theorem, any function \(f \in L^{2}(\Omega)\) can be expanded into a Fourier series, namely
\begin{equation}\label{FourierSeriesExpansion}
    f(x) = \sum_{\kappa \in \dualLat{L}} \FourierCoef{f}(\kappa) \, e^{2 \pi i \kappa \cdot x}, \quad \text{where} \quad \FourierCoef{f}(\kappa) = \frac{1}{\meas{\Omega}} \intOver{\Omega}{f(x) \, e^{-2 \pi i \kappa \cdot x}}.
\end{equation}

In summary, choosing a lattice or, equivalently, a generator matrix and fixing a certain fundamental domain of this lattice, the Fuglede theorem enables Fourier analysis on this fundamental domain through exponential functions related to the dual lattice.

\subsection{The Fourier transform on a fundamental domain}\label{subsection:FT}

We now define the Fourier transform on a fundamental domain \(\Omega\) of a lattice \(L\) in \(\R^{d}\) and discuss some properties of it. Formula \eqref{FourierSeriesExpansion} suggests that we can define the Fourier transform \(\FTFund\) on \(L^{2}(\Omega)\) by
\begin{equation}\label{eq:FTFund}
    \FTFund f(\kappa) := \FourierCoef{f}(\kappa) = \frac{1}{\meas{\Omega}} \intOver{\Omega}{f(x) \, e^{-2 \pi i \kappa \cdot x}} \quad \text{for} \quad \kappa \in \dualLat{L}.
\end{equation}

As in the Euclidean case we would like to find a dense subspace of \(L^{2}(\Omega)\) where the Fourier transform has very useful properties. For the Euclidean Fourier transform this space is the class of Schwartz functions \(\schwartzSpace(\R^{d})\). We will base our findings on the \emph{toroidal Fourier transform}, which is actually a special case of Fourier transforms on fundamental domains of lattices in \(\R^{d}\). See \cite[Chapter 3]{PsiDOAndSymm} for a discussion of toroidal Fourier transforms.

Of course, in the setting of Fourier analysis on fundamental domains of lattices in \(\R^{d}\) we consider different function spaces than in the Euclidean setting. Consider a lattice \(L\) with fundamental domain \(\Omega\). Now note that \(\R^{d} / L\) is homeomorphic to \(\torus = \R^{d} / \Z^{d}\) so that \(\Omega\) is compact for the initial topology. This initial topology is defined as the weakest topology that makes the projection mapping \(x \in \Omega \mapsto x + L \in \R^{d}/L\) continuous, and one can easily check that \(\Omega \cong \R^{d}/L\) with this topology. As a consequence, every complex-valued function on \(\Omega\) has compact support. This means that \(C^{\infty}(\Omega) = C_{c}^{\infty}(\Omega)\), where we note that also the topologies coincide. The compactness of \(\Omega\) also implies that \(C^{\infty}(\Omega) = \schwartzSpace(\Omega)\) (with coinciding topologies). Hence, it is a natural choice to consider the Fourier transform on the function space \(C^{\infty}(\Omega)\), which is constructed as follows.

\begin{definition}
    A function \(f : \R^{d} \to \C\) satisfying \(f(x+\lambda) = f(x)\) for all \(x \in \R^{d}\) and \(\lambda \in L\) is called \(L\)-periodic. The space \(C^{m}(\Omega)\) consists of all \(m\)-times continuously differentiable \(L\)-periodic functions \(f\) considered to be defined on \(\Omega\), and \(C^{\infty}(\Omega) := \bigcap_{m \in \N} C^{m}(\Omega)\). The topology on \(C^{\infty}(\Omega)\) is the topology of uniform convergence on compact sets of the functions and all their derivatives.
\end{definition}

We now define the analogue of the Schwartz space \(\schwartzSpace(\R^{d})\) on a lattice \(L\). Since \(L\) is a discrete space, the requirement of smoothness is removed.

\begin{definition}
    Let \(L\) be a lattice. We say that \(f : L \to \C\) belongs to \(\schwartzSpace(L)\) if for all \(N \in \N\) there exists a constant \(C_{N} > 0\) such that for all \(\lambda \in L\) we have
    \[\abs{f(\lambda)} \leq C_{N} (1 + \abs{\lambda}^{2})^{-N/2}.\]
    We endow \(\schwartzSpace(L)\) with the topology generated by the countable family of seminorms \(p_{j}(f) := \sup_{\lambda \in L} \abs{f(\lambda)} (1 + \abs{\lambda}^{2})^{j/2}\) for \(j \in \N\).
\end{definition}

These function spaces appear in the theory of the toroidal Fourier transform. The torus \(\torus\) is namely the quotient \(\R^{d} / \Z^{d}\) of the space \(\R^{d}\) with the lattice \(\Z^{d}\) and can be viewed as the fundamental domain \([0,1)^{d}\) of \(\Z^{d}\) (with the initial topology). The toroidal Fourier transform has the following useful property, which can be directly compared to the Euclidean case.

\begin{theorem}[{\cite[page 301]{PsiDOAndSymm}}]\label{th:toroidalFT}
    Let
    \[\FTTor : C^{\infty}(\torus) \to \schwartzSpace(\Z^{d}): f \mapsto \FourierCoef{f}(\xi) := \intOver{\torus}{f(x) \, e^{-2 \pi i x \cdot \xi}}\]
    be the toroidal Fourier transform. Then \(\FTTor\) is an isomorphism from \(C^{\infty}(\torus)\) to \(\schwartzSpace(\Z^{d})\), which means that it is a linear homeomorphism.
    Its inverse is given by
    \[\FTTor^{-1} : \schwartzSpace(\Z^{d}) \to C^{\infty}(\torus) : f \mapsto \widecheck{f}(x) = \sum_{\xi \in \Z^{d}} f(\xi) \, e^{2 \pi i x \cdot \xi}.\]
\end{theorem}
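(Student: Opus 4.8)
The plan is to treat $\FTTor$ and the candidate inverse $\FTTor^{-1}$ as maps between the two Fréchet spaces $C^{\infty}(\torus)$ and $\schwartzSpace(\Z^{d})$, to show that each is a well-defined continuous linear map, and then to check that they compose to the identity in both orders. Continuity of the inverse then follows, either because it is exhibited directly in the course of the argument or, if one prefers, from the open mapping theorem for Fréchet spaces.

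First I would show that $\FTTor$ maps into $\schwartzSpace(\Z^{d})$ and is continuous. The point is that $e^{-2\pi i x\cdot\xi}$ is an eigenfunction of the operator $1 - \frac{1}{4\pi^{2}}\Delta_{x}$ with eigenvalue $1 + \abs{\xi}^{2}$, so for $f \in C^{\infty}(\torus)$ repeated integration by parts on the torus (no boundary terms, by $\Z^{d}$-periodicity, and using self-adjointness of $(1-\frac{1}{4\pi^{2}}\Delta)^{N}$) gives
\[(1 + \abs{\xi}^{2})^{N} \FourierCoef{f}(\xi) = \intOver{\torus}{\Bigl[\Bigl(1 - \tfrac{1}{4\pi^{2}}\Delta\Bigr)^{N} f\Bigr](x) \, e^{-2\pi i x\cdot\xi}}.\]
Taking absolute values bounds the left-hand side by $\LpnormOver{1}{\torus}{(1 - \tfrac{1}{4\pi^{2}}\Delta)^{N} f}$, hence by a finite linear combination of the sup-norms $\sup_{x}\abs{\partial^{\alpha}f(x)}$ with $\abs{\alpha}\le 2N$. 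Since $(1+\abs{\xi}^{2})^{N} \ge (1+\abs{\xi}^{2})^{j/2}$ once $N \ge j/2$, this simultaneously shows that $p_{j}(\FourierCoef{f}) < \infty$ for every $j$, so $\FourierCoef{f} \in \schwartzSpace(\Z^{d})$, and that $\FTTor$ is continuous, each seminorm of $\FTTor f$ being controlled by finitely many of the defining seminorms of $C^{\infty}(\torus)$.

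Next I would show $\FTTor^{-1}$ is well-defined and continuous. If $g \in \schwartzSpace(\Z^{d})$, then for any $M$ one has $\sum_{\xi}\abs{g(\xi)}(1+\abs{\xi}^{2})^{M} < \infty$, since for $j$ large it is dominated by $p_{j}(g)\sum_{\xi}(1+\abs{\xi}^{2})^{M - j/2}$ and the last sum over $\Z^{d}$ converges once $j > 2M + d$. Hence the series $\widecheck{g}(x) = \sum_{\xi \in \Z^{d}} g(\xi)\, e^{2\pi i x\cdot\xi}$ and all of its termwise derivatives converge absolutely and uniformly on $\R^{d}$; consequently $\widecheck{g} \in C^{\infty}(\torus)$, with $\sup_{x}\abs{\partial^{\alpha}\widecheck{g}(x)} \le (2\pi)^{\abs{\alpha}}\sum_{\xi}\abs{\xi^{\alpha}}\,\abs{g(\xi)}$, again controlled by a single seminorm $p_{j}(g)$ with $j$ large, which gives continuity of $\FTTor^{-1}$.

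It remains to verify that the two maps are mutually inverse, and this is where the only real subtlety lies. Applying \cref{th:FugledeTheorem} with $L = \Z^{d}$ (so $\dualLat{L} = \Z^{d}$) and $\Omega = [0,1)^{d}$, the family $\{e^{2\pi i x\cdot\xi} : \xi \in \Z^{d}\}$ is an orthonormal basis of $L^{2}(\torus)$; in particular $\intOver{\torus}{e^{2\pi i x\cdot(\xi-\eta)}} = \delta_{\xi,\eta}$. For $g \in \schwartzSpace(\Z^{d})$ the uniform convergence from the previous step permits interchanging summation and integration, giving $\FTTor(\widecheck{g})(\eta) = \sum_{\xi} g(\xi)\, \delta_{\xi,\eta} = g(\eta)$. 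For $f \in C^{\infty}(\torus) \subseteq L^{2}(\torus)$, completeness of the orthonormal basis means that the partial sums of $\sum_{\xi}\FourierCoef{f}(\xi)\, e^{2\pi i x\cdot\xi}$ converge to $f$ in $L^{2}(\torus)$; by the first step this same series converges uniformly, hence to a continuous function which must agree with $f$ almost everywhere and therefore everywhere. Thus $\FTTor^{-1}\FTTor f = f$, and combining all the steps shows $\FTTor$ is a linear homeomorphism with the stated inverse. The main obstacle is precisely this identity $\FTTor^{-1}\FTTor = \mathrm{id}$: it rests on completeness of the exponential system on $\torus$ (supplied here by Fuglede's theorem) together with the uniform convergence coming from the rapid decay established in the first step; everything else is routine integration by parts and bookkeeping with seminorms.
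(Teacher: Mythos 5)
The paper does not prove this statement; it imports it from \cite[page 301]{PsiDOAndSymm}, and your argument is correct and is essentially the standard proof given there: rapid decay of $\FourierCoef{f}$ via integration by parts against powers of $1-\tfrac{1}{4\pi^{2}}\Delta$, absolute and uniform convergence of the series and its termwise derivatives from the decay of $g\in\schwartzSpace(\Z^{d})$, and the inversion identities from orthonormality plus completeness of the exponentials. Your only nonstandard choice is to obtain completeness of $\{e^{2\pi i x\cdot\xi}\}$ in $L^{2}(\torus)$ from \cref{th:FugledeTheorem} rather than from Stone--Weierstrass or Fej\'er; this is logically admissible here since the paper states Fuglede's theorem earlier as an imported result, though it is foundationally backwards (the lattice case of Fuglede's theorem is usually derived from classical Fourier analysis on the torus), so citing the classical completeness directly would be cleaner.
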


We can relate the Fourier transform on a fundamental domain \(\Omega\) of a lattice \(L\) in \(\R^{d}\) to this toroidal case. Since \(A : \torus \to A \torus = \Omega\) is a homeomorphism, we find an (isometric) isomorphism \(\alpha : C^{\infty}(\Omega) \to C^{\infty}(\torus)\) with \(\alpha[f](x) := f(Ax)\) for every \(x \in \torus\). Similarly, we see that \(\beta : \schwartzSpace(\Z^{d}) \to \schwartzSpace(\dualLat{L})\) with \(\beta[f](x) := f((\dualLatMat{A})^{-1} x) = f(\transp{A} x)\) for every \(x \in \dualLat{L}\) is also an isomorphism. Note that the isomorphisms \(\alpha\) and \(\beta\) actually just represent a change of variables.

The Fourier transform on the fundamental domain \(\Omega\) is related to the toroidal Fourier transform via the isomorphisms \(\alpha\) and \(\beta\) as represented in the following diagram:
\[\FTFund : C^{\infty}(\Omega) \xrightarrow{\alpha} C^{\infty}(\torus) \xrightarrow{\FTTor} \schwartzSpace(\Z^{d}) \xrightarrow{\beta} \schwartzSpace(\dualLat{L}).\]
Since \(\FTFund = \beta \circ \FTTor \circ \alpha\) is a composition of isomorphisms, \cref{th:toroidalFT} gives the following result.

\begin{theorem}
    Let
    \[\FTFund : C^{\infty}(\Omega) \to \schwartzSpace(\dualLat{L}) : f \mapsto \FourierCoef{f}(\kappa) := \frac{1}{\meas{\Omega}} \intOver{\Omega}{f(x) \, e^{-2 \pi i \kappa \cdot x}}\]
    be the Fourier transform on a fundamental domain \(\Omega\) of a lattice \(L\) in \(\R^{d}\). Then \(\FTFund\) is an isomorphism from \(C^{\infty}(\Omega)\) to \(\schwartzSpace(\dualLat{L})\). Its inverse is given by
    \[\FTFund^{-1} : \schwartzSpace(\dualLat{L}) \to C^{\infty}(\Omega) : f \mapsto \widecheck{f}(x) = \sum_{\kappa \in \dualLat{L}} f(\kappa) \, e^{2 \pi i \kappa \cdot x}.\]
\end{theorem}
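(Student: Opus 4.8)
The plan is to exploit the factorisation $\FTFund = \beta \circ \FTTor \circ \alpha$ already displayed above: I would verify that each of $\alpha$, $\FTTor$, $\beta$ is a linear homeomorphism between the indicated spaces, so that the composite $\FTFund$ is one as well, and then check that this composite is given by the stated formula, the inverse being obtained by composing the three inverses. Only $\alpha$ and $\beta$ need attention, since $\FTTor \colon C^{\infty}(\torus) \to \schwartzSpace(\Z^{d})$ is a linear homeomorphism by \cref{th:toroidalFT}.

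For $\alpha$: because $A$ induces a homeomorphism $\torus \to \Omega$ with inverse $A^{-1}$ (recall $\R^{d}/L \cong \torus$ from \cref{section:FAFund}), the rule $\alpha[f](x) = f(Ax)$ is a linear bijection $C^{\infty}(\Omega) \to C^{\infty}(\torus)$ with inverse $g \mapsto g(A^{-1}\,\cdot\,)$. By the chain rule with the constant matrix $A$, every partial derivative of $\alpha[f]$ is a fixed linear combination, with coefficients depending only on $A$, of partial derivatives of $f$ composed with $A$, and symmetrically for $\alpha^{-1}$; since $A$ and $A^{-1}$ map compacta to compacta, $\alpha$ and $\alpha^{-1}$ are continuous for the topology of uniform convergence on compacta of all derivatives, so $\alpha$ is a linear homeomorphism. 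For $\beta$: the linear map $\xi \mapsto \invtransp{A}\xi$ is a bijection $\Z^{d} \to \dualLat{L}$ (its matrix $\dualLatMat{A} = \invtransp{A}$ generates $\dualLat{L}$), so $\beta[f](x) = f(\transp{A}x)$ is a linear bijection $\schwartzSpace(\Z^{d}) \to \schwartzSpace(\dualLat{L})$ with inverse $g \mapsto g(\invtransp{A}\,\cdot\,)$; since $\transp{A}$ and $\invtransp{A}$ are bounded operators on $\R^{d}$, each of $1 + \abs{\transp{A}x}^{2}$ and $1 + \abs{x}^{2}$ is bounded by a constant multiple of the other, so under $\beta$ the seminorms defining $\schwartzSpace(\Z^{d})$ and $\schwartzSpace(\dualLat{L})$ correspond up to constants, whence $\beta$ and $\beta^{-1}$ are continuous and $\beta$ is a linear homeomorphism. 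Thus $\FTFund = \beta \circ \FTTor \circ \alpha$ is a linear homeomorphism $C^{\infty}(\Omega) \to \schwartzSpace(\dualLat{L})$.

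It then remains to match formulas. For $f \in C^{\infty}(\Omega)$ and $\kappa \in \dualLat{L}$, write $\kappa = \invtransp{A}\xi$ with $\xi = \transp{A}\kappa \in \Z^{d}$; using $y \cdot \transp{A}\kappa = \kappa \cdot (Ay)$ and the substitution $x = Ay$, which carries $[0,1)^{d}$ onto the parallelotope $\Omega_{P} = A[0,1)^{d}$ of \eqref{eq:FundDomParal} with $\mathrm{d}x = \abs{\det A}\,\mathrm{d}y = \meas{\Omega}\,\mathrm{d}y$ (the measure of every fundamental domain being $\abs{\det A}$, as noted in \cref{section:FAFund}), one gets
\[(\beta \circ \FTTor \circ \alpha)[f](\kappa) = \FTTor(\alpha[f])(\transp{A}\kappa) = \intOverVar{[0,1)^{d}}{f(Ay)\,e^{-2\pi i \kappa \cdot (Ay)}}{y} = \frac{1}{\meas{\Omega}}\intOver{\Omega_{P}}{f(x)\,e^{-2\pi i \kappa \cdot x}}.\]
Since $f$ is $L$-periodic and $x \mapsto e^{-2\pi i \kappa \cdot x}$ is $L$-periodic (because $\kappa \cdot \lambda \in \Z$ for $\lambda \in L$), the last integral is the same over any fundamental domain, so the composite equals $\FourierCoef{f}(\kappa) = \FTFund f(\kappa)$ from \eqref{eq:FTFund}. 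A parallel computation for $\alpha^{-1} \circ \FTTor^{-1} \circ \beta^{-1}$, using $(A^{-1}x) \cdot \xi = x \cdot \invtransp{A}\xi$ and reindexing the sum over $\Z^{d}$ by $\kappa = \invtransp{A}\xi \in \dualLat{L}$, produces $\widecheck{g}(x) = \sum_{\kappa \in \dualLat{L}} g(\kappa)\,e^{2\pi i \kappa \cdot x}$, which is therefore $\FTFund^{-1} g$. I do not expect a genuine obstacle here; the only points requiring care are that the two changes of variable preserve the respective topologies (handled by the chain rule and the comparability of the weights $1 + \abs{x}^{2}$) and the bookkeeping of the Jacobian $\abs{\det A} = \meas{\Omega}$ together with the $L$-periodicity argument replacing $\Omega_{P}$ by an arbitrary fundamental domain $\Omega$.
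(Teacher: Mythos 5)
Your proposal is correct and follows exactly the paper's route: the paper also obtains the result by writing \(\FTFund = \beta \circ \FTTor \circ \alpha\) and invoking \cref{th:toroidalFT}, merely asserting that \(\alpha\) and \(\beta\) are isomorphisms where you verify this (and the formula matching via the change of variables \(x = Ay\) and \(L\)-periodicity) in detail.
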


Later we will see that both \(\FTFund\) and \(\FTFund^{-1}\) have bounded \(L^{p}(\Omega)\)-\(\ell^{p'}(\dualLat{L})\) extensions for every \(1 \leq p \leq 2\) as a consequence of the Hausdorff-Young inequalities in \cref{th:HY}, where \(\frac{1}{p} + \frac{1}{p'} = 1\). But note that these extensions need not be each other's inverse on a superspace of \(C^{\infty}(\Omega)\). However, we will still write \(\FTFund^{-1}\) for the bounded extension the corresponding operator on \(C^{\infty}(\Omega)\) as this is customary notation.

Note that \(C^{\infty}(\Omega)\) is dense in \(L^{p}(\Omega)\) for \(1 \leq p < \infty\) since this is generally known for \(\Omega \subseteq \R^{d}\). It is also true that \(\schwartzSpace(\dualLat{L})\) is dense in \(\ell^{p}(\dualLat{L})\) for \(1 \leq p < \infty\), because we can approximate an arbitrary \(f \in \ell^{p}(\dualLat{L})\) by a sequence of finitely supported functions on \(\dualLat{L}\) and these belong to \(\schwartzSpace(\dualLat{L})\). Hence, the bounded \(L^{p}(\Omega)\)-\(\ell^{p'}(\dualLat{L})\) extensions of \(\FTFund\) and \(\FTFund^{-1}\) are unique.

\begin{remark}
    Suppose that there exists some null set \(N \subseteq \Omega\) such that \(\widetilde{\Omega} := \Omega \setminus N\) is open. Let \(\tau_{init}\) be the initial topology on \(\Omega\), and let \(\widetilde{\tau}_{rel}\) be the relative topology on \(\widetilde{\Omega}\). The function spaces in this subsection and the results later in this paper all refer to the initial topology on \(\Omega\). The reason for this is that the relative topology \(\tau_{rel}\) on \(\Omega\) may be too strong to allow compactness of \(\Omega\), while the (possibly) weaker topology \(\tau_{init} \subseteq \tau_{rel}\) solves this problem. However, the relative topology is also a natural topology to consider, but are the results in this paper still true for \((\widetilde{\Omega},\tau_{rel})\)?
    
    This happens to be the case if we make the necessary adjustments. Since \((\widetilde{\Omega},\widetilde{\tau}_{rel})\) is not compact, we consider the function space \(C_{c}^{\infty}(\widetilde{\Omega},\widetilde{\tau}_{rel})\) of smooth compactly supported functions on \(\widetilde{\Omega}\). Now note that \(C_{c}^{\infty}(\widetilde{\Omega},\widetilde{\tau}_{rel}) \subseteq C^{\infty}(\Omega,\tau_{init})\). Hence, the Fourier transform \(\FTFund\) is defined on \(C_{c}^{\infty}(\widetilde{\Omega},\widetilde{\tau}_{rel})\), but the restriction of the isomorphism \(\alpha\) is not surjective anymore, so that the image of \(C_{c}^{\infty}(\widetilde{\Omega},\widetilde{\tau}_{rel})\) under \(\FTFund\) is difficult to describe. However, since we do not use the surjectivity of \(\FTFund\) in this paper, all results will remain valid. Remark that \(L^{p}(\widetilde{\Omega}) = L^{p}(\Omega)\) for all \(1 \leq p \leq \infty\). Hence, in particular, we also find that a Fourier multiplier \(A : C_{c}^{\infty}(\widetilde{\Omega},\widetilde{\tau}_{rel}) \to C_{c}^{\infty}(\widetilde{\Omega},\widetilde{\tau}_{rel})\) has a bounded \(L^{p}(\widetilde{\Omega})\)-\(L^{q}(\widetilde{\Omega})\) extension under some conditions from \cref{section:BoundFourierMult}. For example, this reasoning applies when \(\widetilde{\Omega}\) is the interior of \(\Omega_{P}\), where \(\Omega_{P}\) is the parallelotopic fundamental domain as given by equation \eqref{eq:FundDomParal}.
\end{remark}

\subsection{Spectral sets and the Fuglede conjecture}

We now briefly discuss the Fuglede theorem outside the context of lattices. A measurable set \(\Omega \subseteq \R^{d}\) with \(0 < \meas{\Omega} < \infty\) is called a \emph{spectral set} if there exists a set \(\Lambda \subseteq \R^{d}\) such that \(\{e^{2 \pi i \lambda \cdot x} \mid \lambda \in \Lambda\}\) is an orthonormal basis for \(L^{2}(\Omega)\). In this case the set \(\Lambda\) is called an \emph{exponent set} for \(\Omega\). Note that every exponent set is countably infinite.

Further, a measurable set \(\Omega \subseteq \R^{d}\) with \(0 < \meas{\Omega} < \infty\) is called a \emph{direct summand} if there exists a set \(\Gamma \subseteq \R^{d}\) such that (after modifying \(\Omega\) with a null set) \(\Omega + \Gamma = \R^{d}\) as a direct sum. This set \(\Gamma\) is called a \emph{translation set} for \(\Omega\).

It is asserted in \cite[page 119]{Fuglede} that exponent sets and translation sets are always discrete, closed and total, which means that these sets are not contained in a proper subspace of \(\R^{d}\). As a consequence every translation set is also countably infinite. Remark that the Fuglede theorem tells us that a measurable set \(\Omega \subseteq \R^{d}\) is a spectral set admitting an exponent \emph{subgroup} \(\Lambda \subseteq \R^{d}\) if and only if \(\Omega\) is a direct summand admitting a translation \emph{subgroup} \(\Gamma \subseteq \R^{d}\).

In \cite{Fuglede} Fuglede conjectures that, more generally, a measurable set \(\Omega \subseteq \R^{d}\) with \(0 < \meas{\Omega} < \infty\) is a spectral set if and only if \(\Omega\) is a direct summand. \Cref{th:FugledeTheorem} states that this conjecture is true for fundamental domains of lattices in \(\R^{d}\). This conjecture has also been shown in \cite{FugConj} to be true for any compact convex set \(\Omega \subseteq \R^{d}\) with non-empty interior, but counterexamples have been found in \(\R^{d}\) for \(d \geq 3\). However, the conjecture remains unknown for \(d \in \{1,2\}\).

We can introduce a Fourier transform on a spectral set \(\Omega\) in the same way as in \cref{subsection:FT}. We also have an obvious candidate for the inverse Fourier transform. However, we do not have some isomorphisms that link this Fourier transform to a canonical Fourier transform as before. So the question arises whether we can prove that this Fourier transform is invertible and whether it is an isomorphism on \(C_{c}^{\infty}(\Omega)\). This would provide a way to generalise the results in this paper to any spectral set \(\Omega \subseteq \R^{d}\). However, it may seem improbable that we can invert this Fourier transform, because we might loose control over the corresponding Dirichlet kernel if we do not have more structural information about the exponent set \(\Lambda\). Otherwise we could ask whether there is another approach that works for all spectral sets. We leave this question open.

\subsection{Lattices of type \texorpdfstring{\(A_{d}\)}{Ad}}

Sometimes a lattice \(L\) has a simpler description in a higher-dimensional space. One can try to extend the theory in this paper to lattices living in proper subspaces. We will give an example of such a lattice, which appears in \cite{LiXu}. We study this particular kind of lattices through homogeneous coordinates.

Let \(d \geq 1\). If we identify \(\R^{d}\) with the hyperplane
\[\embRealHyperpl := \{(t_{1}, t_{2}, \dots, t_{d+1}) \in \R^{d+1} \mid t_{1} + t_{2} + \dots + t_{d+1} = 0\}\]
in \(\R^{d+1}\), then the lattice of type \(A_{d}\) is defined as
\[\embAdLat := \Z^{d+1} \cap \embRealHyperpl = \{(k_{1}, k_{2}, \dots, k_{d+1}) \in \Z^{d+1} \mid k_{1} + k_{2} + \dots + k_{d+1} = 0\}.\]
Note that this lattice lives in a \(d\)-dimensional subspace of \(\R^{d+1}\), so it is described by homogeneous coordinates \(\homCo{t}\), for which we will use bold letters, and \(A_{d} := \embAdLat\).

We consider the fundamental domain of the lattice \(A_{d}\) tiling \(\embRealHyperpl\) given by
\[\Omega_{H} := \{\homCo{t} \in \R_{H}^{d+1} \mid -1 < t_{i} - t_{j} \leq 1, \forall \, 1 \leq i < j \leq d+1\}.\]
Note that the strict inequality ensures that the translated copies of \(\Omega_{H}\) do not overlap. As an example, for \(d = 2\) this fundamental domain \(\Omega_{H}\) is a regular hexagon.

Note that we need slightly different but similar definitions for the generator matrix of the lattice \(A_{d}\) and related concepts because this lattice lives in a proper subspace of \(\R^{d+1}\). The natural choice for the generator matrix of the lattice \(A_{d}\) is the \((d+1) \times d\) matrix
\[A := \begin{pmatrix}
1 & 0 & \cdots & 0 & 0 \\
0 & 1 & \cdots & 0 & 0 \\
\vdots & \vdots & \ddots & \vdots & \vdots \\
0 & 0 & \cdots & 1 & 0 \\
0 & 0 & \cdots & 0 & 1 \\
-1 & -1 & \cdots & -1 & -1
\end{pmatrix},\]
since \(A_{d} = A \Z^{d+1}\) with this choice.
The dual lattice \(\dualLat{A_{d}}\) is defined to be generated by the matrix
\[\dualLatMat{A} := A(\transp{A} A)^{-1} = \frac{1}{d+1} \begin{pmatrix}
d & -1 & \cdots & -1 & -1 \\
-1 & d & \cdots & -1 & -1 \\
\vdots & \vdots & \ddots & \vdots & \vdots \\
-1 & -1 & \cdots & d & -1 \\
-1 & -1 & \cdots & -1 & d \\
-1 & -1 & \cdots & -1 & -1
\end{pmatrix}.\]

We now give an explicit description of the dual lattice \(\dualLat{A_{d}}\). Consider
\[\dualLatSpace := \{\homCo{k} \in A_{d} \mid k_{1} \equiv k_{2} \equiv \dots \equiv k_{d+1} \pmod{d+1}\}.\]
To every element \(\dualLatMat{A} j\) with \(j \in \Z^{d}\) of the dual lattice \(\dualLat{A_{d}}\) we relate the vector \(\homCo{k} = (d+1) \dualLatMat{A} j\), which belongs to \(A_{d}\) because the integers are closed under linear combinations. Moreover, we can easily compute that \(\homCo{k} \in \dualLatSpace\). Conversely, we immediately see that \(j = \transp{A} \homCo{k} / (d+1)\), so it follows that \(j \in \Z^{d}\) for all \(\homCo{k} \in \dualLatSpace\). In short, this means that
\[\dualLat{A_{d}} = \left\{\dualLatMat{A} j \mid j \in \Z^{d}\right\} = \left\{\frac{\homCo{k}}{d+1} \mid \homCo{k} \in \dualLatSpace\right\}.\]

The Fuglede theorem tells us that \(\{\phi_{\homCo{j}} \mid \homCo{j} \in \dualLatSpace\}\) is an orthonormal basis of \(L^{2}(\Omega_{H})\), where
\[\phi_{\homCo{j}}(\homCo{t}) := e^{\frac{2 \pi i}{d+1} \homCo{j} \cdot \homCo{t}} \quad \text{for} \quad \homCo{j} \in \dualLatSpace \quad \text{and} \quad \homCo{t} \in \embRealHyperpl.\]
In particular, this means that
\[\innerproduct{\phi_{\homCo{j}}}{\phi_{\homCo{k}}} := \frac{1}{\meas{\Omega_{H}}} \intOverVar{\Omega_{H}}{\phi_{\homCo{j}}(\homCo{t}) \, \complexConjugate{\phi_{\homCo{k}}(\homCo{t})}}{\homCo{t}} = \delta_{\homCo{j} \homCo{k}},\]
where \(\delta_{\homCo{j} \homCo{k}} = 1\) if \(\homCo{j} = \homCo{k}\) and \(0\) otherwise. We note that \(\meas{\Omega_{H}} = \sqrt{\det(\transp{A} A)} = \sqrt{d+1}\).

Let us call a function \(f\) on \(\embRealHyperpl\) \(H\)-periodic if it is periodic with respect to the lattice \(A_{d}\), which means that \(f(\homCo{t} + \homCo{k}) = f(\homCo{t})\) for all \(\homCo{k} \in A_{d}\). Note that the functions \(\phi_{\homCo{j}}\) are \(H\)-periodic. Further, we find that every \(H\)-periodic \(L^{2}(\Omega_{H})\)-function \(f\) has a Fourier series expansion:
\[f(\homCo{t}) = \sum_{\homCo{k} \in \dualLatSpace} \FourierCoef{f_{\homCo{k}}} \phi_{\homCo{k}}(\homCo{t}), \quad \text{where } \FourierCoef{f_{\homCo{k}}} := \frac{1}{\sqrt{d+1}} \intOverVar{\Omega_{H}}{f(\homCo{t}) \, \phi_{-\homCo{k}}(\homCo{t})}{\homCo{t}}.\]

\section{Some fundamental inequalities}\label{section:FundIneq}

In this section we prove some classical inequalities in the setting of fundamental domains of lattices in \(\R^{d}\), namely the Hausdorff-Young inequality, Paley's inequality and the Hausdorff-Young-Paley inequality. We will also treat the Hardy-Littlewood inequality, but this is not necessary for the further developments. We begin by deriving the Plancherel formula in our setting.

Let \(L\) be a lattice in \(\R^{d}\) with fundamental domain \(\Omega\). By a well-known general fact about Hilbert spaces \cite[Theorem 4.18]{RealComplex} applied to \(L^{2}(\Omega)\) with orthonormal basis \(\{e^{2 \pi i \kappa \cdot x} \mid \kappa \in \dualLat{L}\}\) we find for \(f \in L^{2}(\Omega)\) the following \emph{Plancherel formula}:
\begin{equation}\label{Plancherel}
    \LpnormOver{2}{\Omega}{f}^{2} = \frac{1}{\meas{\Omega}} \intOver{\Omega}{\abs{f(x)}^{2}} = \sum_{\kappa \in \dualLat{L}} |\FourierCoef{f}(\kappa)|^{2} = \lpnormOver{2}{\dualLat{L}}{\FourierCoef{f}}^{2},
\end{equation}
where \(\FourierCoef{f}(\kappa)\) is the Fourier transform of \(f\) as in equation \eqref{eq:FTFund}.

\subsection{Hausdorff-Young inequality}\label{subsection:HYIneq}

The first inequality that we will consider in this section, is the Hausdorff-Young inequality. It is fundamental since it shows that the Fourier transform and its inverse have bounded \(L^{p}\)-\(L^{p'}\) extensions. The proof proceeds as in the Euclidean case \cite[Corollary 1.3.14]{PsiDOAndSymm}.

\begin{theorem}[Hausdorff-Young inequality]\label{th:HY}
    Let \(1 \leq p \leq 2\) and \(\frac{1}{p} + \frac{1}{p'} = 1\). If \(f \in L^{p}(\Omega)\), then \(\FourierCoef{f} \in \ell^{p'}(\dualLat{L})\) and
    \begin{equation}\label{eq:HY}
        \lpnormOver{p'}{\dualLat{L}}{\FourierCoef{f}} \leq \LpnormOver{p}{\Omega}{f}.
    \end{equation}
    Similarly, if \(f \in \ell^{p}(\dualLat{L})\), then \(\widecheck{f} \in L^{p'}(\Omega)\) and
    \begin{equation}\label{eq:HYInv}
        \LpnormOver{p'}{\Omega}{\widecheck{f}} \leq \lpnormOver{p}{\dualLat{L}}{f}.
    \end{equation}
\end{theorem}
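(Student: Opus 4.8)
The plan is to deduce both inequalities from the complex interpolation theorem of Riesz--Thorin, interpolating between an elementary $L^{1}$--$\ell^{\infty}$ bound and the $L^{2}$--$\ell^{2}$ isometry supplied by the Plancherel formula \eqref{Plancherel}. Since $\meas{\Omega} < \infty$ we have $L^{p}(\Omega) \subseteq L^{1}(\Omega)$ for every $1 \leq p \leq 2$, so the integral in \eqref{eq:FTFund} defines $\FTFund$ as a single linear map on $L^{1}(\Omega)$, and it is this map that we estimate on each $L^{p}(\Omega)$. (Both the normalized Lebesgue measure on $\Omega$ and the counting measure on $\dualLat{L}$ are $\sigma$-finite, so Riesz--Thorin is applicable.)

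First I would record the two endpoints. For $p = 2$ the Plancherel formula \eqref{Plancherel} states $\lpnormOver{2}{\dualLat{L}}{\FourierCoef{f}} = \LpnormOver{2}{\Omega}{f}$, so $\FTFund \colon L^{2}(\Omega) \to \ell^{2}(\dualLat{L})$ has norm $1$. For $p = 1$, recalling that $L^{p}(\Omega)$ carries the normalized measure $\meas{\Omega}^{-1}\,\mathrm{d}x$, we have for each $\kappa \in \dualLat{L}$
\[
\abs{\FourierCoef{f}(\kappa)} = \abs{\frac{1}{\meas{\Omega}} \intOver{\Omega}{f(x)\, e^{-2\pi i \kappa \cdot x}}} \leq \frac{1}{\meas{\Omega}} \intOver{\Omega}{\abs{f(x)}} = \LpnormOver{1}{\Omega}{f},
\]
whence $\lpnormOver{\infty}{\dualLat{L}}{\FourierCoef{f}} \leq \LpnormOver{1}{\Omega}{f}$, i.e.\ $\FTFund \colon L^{1}(\Omega) \to \ell^{\infty}(\dualLat{L})$ has norm at most $1$.

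Next I would apply Riesz--Thorin. Given $1 \leq p \leq 2$, choose $\theta \in [0,1]$ with $\frac{1}{p} = (1-\theta) + \frac{\theta}{2}$; the conjugacy relation then forces $\frac{1}{p'} = \frac{\theta}{2}$, so interpolation of the two endpoint bounds yields operator norm at most $1^{1-\theta}\cdot 1^{\theta} = 1$ from $L^{p}(\Omega)$ to $\ell^{p'}(\dualLat{L})$, which is \eqref{eq:HY}. For the second statement I would repeat the argument verbatim for $f \mapsto \widecheck{f}$: the $\ell^{2}$--$L^{2}$ endpoint is again Plancherel \eqref{Plancherel}, and the $\ell^{1}$--$L^{\infty}$ endpoint follows from $\abs{\widecheck{f}(x)} = \abs{\sum_{\kappa \in \dualLat{L}} f(\kappa)\, e^{2\pi i \kappa \cdot x}} \leq \sum_{\kappa \in \dualLat{L}} \abs{f(\kappa)} = \lpnormOver{1}{\dualLat{L}}{f}$; Riesz--Thorin with the same $\theta$ then gives \eqref{eq:HYInv}. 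Alternatively, \eqref{eq:HYInv} follows from \eqref{eq:HY} by duality, since $f \mapsto \widecheck{f}$ is, up to the normalization, the adjoint of $\FTFund$ for the pairings between $L^{2}(\Omega)$ and $\ell^{2}(\dualLat{L})$.

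I do not expect a serious obstacle; the argument is mostly bookkeeping. The two points to watch are: (i) the normalized measure on $\Omega$ must be used consistently throughout, since only then are both endpoint constants exactly $1$ and does interpolation return the clean constant $1$ in \eqref{eq:HY} and \eqref{eq:HYInv}; and (ii) one must check that the operator fed into Riesz--Thorin really restricts to $\FTFund$ on each $L^{p}(\Omega)$ — this is immediate from $L^{p}(\Omega) \subseteq L^{1}(\Omega)$, or one may instead first prove \eqref{eq:HY} on the dense subspace $C^{\infty}(\Omega)$ and extend by continuity, using density of $C^{\infty}(\Omega)$ in $L^{p}(\Omega)$ for $1 \leq p < \infty$ and of $\schwartzSpace(\dualLat{L})$ in $\ell^{p}(\dualLat{L})$.
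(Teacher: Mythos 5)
Your argument coincides with the paper's proof: both establish the trivial endpoint bounds $\lpnormOver{\infty}{\dualLat{L}}{\FourierCoef{f}} \leq \LpnormOver{1}{\Omega}{f}$ and $\LpnormOver{\infty}{\Omega}{\widecheck{f}} \leq \lpnormOver{1}{\dualLat{L}}{f}$, pair each with the Plancherel identity \eqref{Plancherel} as the $L^{2}$--$\ell^{2}$ endpoint, and apply Riesz--Thorin interpolation. The proposal is correct, and your extra remarks on the normalized measure and on restricting the operator to each $L^{p}(\Omega)$ are just careful bookkeeping of points the paper leaves implicit.
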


\begin{proof}
    For all \(f \in L^{1}(\Omega)\) we have
    \[\lpnormOver{\infty}{\dualLat{L}}{\FourierCoef{f}} = \sup_{\kappa \in \dualLat{L}} \frac{1}{\meas{\Omega}} \abs{\intOver{\Omega}{f(x) \, e^{-2 \pi i \kappa \cdot x}}} \leq \LpnormOver{1}{\Omega}{f}.\]
    We obtain the desired result by a straightforward application of the Riesz-Thorin interpolation theorem \cite[Theorem 1.3.4]{Grafakos} using this estimate and the Plancherel formula \eqref{Plancherel}.
    
    The second inequality follows similarly by applying Riesz-Thorin interpolation to the estimates
    \[\LpnormOver{\infty}{\Omega}{\widecheck{f}} = \esssup_{x \in \Omega} \abs{\sum_{\kappa \in \dualLat{L}} f(\kappa) \, e^{2 \pi i \kappa \cdot x}} \leq \sum_{\kappa \in \dualLat{L}} \abs{f(\kappa)} = \lpnormOver{1}{\dualLat{L}}{f}\]
    and
    \[\LpnormOver{2}{\Omega}{\widecheck{f}}^{2} = \frac{1}{\meas{\Omega}} \intOver{\Omega}{\abs{\sum_{\kappa \in \dualLat{L}} f(\kappa) \, e^{2 \pi i \kappa \cdot x}}^{2}} \leq \frac{1}{\meas{\Omega}} \intOver{\Omega}{\sum_{\kappa \in \dualLat{L}} \abs{f(\kappa)}^{2}} = \lpnormOver{2}{\dualLat{L}}{f}^{2}.\]
    This completes the proof.
\end{proof}

\subsection{Paley's inequality}\label{subsection:PaleyType}

We now consider a Paley-type inequality, which can be seen as a weighted version of the Plancherel formula \eqref{Plancherel} for \(L^{p}(\Omega)\) with \(1 < p \leq 2\). Our proof strategy is based on the proof of \cite[Theorem 4.2]{CKNR}.

\begin{theorem}[Paley's inequality]\label{th:PaleyType}
    Let \(1 < p \leq 2\). If \(\varphi(\kappa)\) is a positive function on \(\dualLat{L}\) such that
    \[M_{\varphi} := \sup_{s > 0} s \sum\limits_{\substack{\kappa \in \dualLat{L} \\ \varphi(\kappa) \geq s}} 1 < \infty,\]
    then for every \(f \in L^{p}(\Omega)\) we have
    \begin{equation}\label{ineq:PaleyType}
        \left( \sum_{\kappa \in \dualLat{L}} |\FourierCoef{f}(\kappa)|^{p} \, \varphi(\kappa)^{2-p} \right)^{\frac{1}{p}} \lesssim M_{\varphi}^{\frac{2-p}{p}} \LpnormOver{p}{\Omega}{f}.
    \end{equation}
\end{theorem}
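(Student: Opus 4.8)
The plan is to read the left-hand side of \eqref{ineq:PaleyType} as an $L^{p}$-norm against a weighted counting measure on $\dualLat{L}$ and to obtain the bound by interpolating two endpoint estimates. Let $\nu$ be the measure on $\dualLat{L}$ assigning mass $\varphi(\kappa)^{2}$ to each point $\kappa$; this is $\sigma$-finite since $\dualLat{L}$ is countable and $\varphi$ is positive and real-valued. Define the linear operator $T$ on $L^{1}(\Omega)$ by $Tf(\kappa) := \FourierCoef{f}(\kappa)/\varphi(\kappa)$, which makes sense because $\FourierCoef{f}(\kappa)$ is well defined for every $f \in L^{1}(\Omega)$. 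Since $L^{p}(\Omega) \subseteq L^{1}(\Omega)$ for $1 \leq p \leq 2$ (indeed $\LpnormOver{1}{\Omega}{f} \leq \LpnormOver{p}{\Omega}{f}$ for the normalized measure, by Jensen), and since $|\FourierCoef{f}(\kappa)|^{p}\varphi(\kappa)^{2-p} = |Tf(\kappa)|^{p}\varphi(\kappa)^{2}$, the claim \eqref{ineq:PaleyType} is equivalent to the statement that $T : L^{p}(\Omega) \to L^{p}(\dualLat{L},\nu)$ is bounded with norm $\lesssim_{p} M_{\varphi}^{(2-p)/p}$.

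The two endpoints are $p = 2$ and $p = 1$. At $p = 2$, Plancherel's identity \eqref{Plancherel} gives $\lVert Tf \rVert_{L^{2}(\dualLat{L},\nu)}^{2} = \sum_{\kappa \in \dualLat{L}} |\FourierCoef{f}(\kappa)|^{2} = \LpnormOver{2}{\Omega}{f}^{2}$, so $T$ is of strong, hence weak, type $(2,2)$ with constant $1$, uniformly in $\varphi$. At $p = 1$, I would show that $T$ is of weak type $(1,1)$ with constant $\lesssim M_{\varphi}$. The normalization of $\FTFund$ yields the pointwise bound $|\FourierCoef{f}(\kappa)| \leq \LpnormOver{1}{\Omega}{f}$, whence for $\alpha > 0$ one has $\{\kappa : |Tf(\kappa)| > \alpha\} \subseteq \{\kappa : \varphi(\kappa) < N\}$ with $N := \LpnormOver{1}{\Omega}{f}/\alpha$, so it remains to estimate $\nu\{\kappa : \varphi(\kappa) < N\} = \sum_{\varphi(\kappa) < N}\varphi(\kappa)^{2}$.

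This distributional estimate is the crux of the proof, and the only point where the hypothesis $M_{\varphi} < \infty$ enters. Using the layer-cake representation $\varphi(\kappa)^{2} = \int_{0}^{\infty} 2s\,\charFunction{\{s < \varphi(\kappa)\}}\,\mathrm{d}s$ together with Tonelli's theorem to interchange sum and integral, one gets $\sum_{\varphi(\kappa) < N}\varphi(\kappa)^{2} = \int_{0}^{N} 2s\cdot \#\{\kappa \in \dualLat{L} : s < \varphi(\kappa) < N\}\,\mathrm{d}s$. Since $\#\{\kappa : \varphi(\kappa) > s\} \leq \#\{\kappa : \varphi(\kappa) \geq s\} \leq M_{\varphi}/s$ by the definition of $M_{\varphi}$, the integrand is at most $2M_{\varphi}$, hence $\nu\{\kappa : \varphi(\kappa) < N\} \leq 2M_{\varphi}N$. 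Combining this with the inclusion above gives $\alpha\,\nu\{\kappa : |Tf(\kappa)| > \alpha\} \leq 2M_{\varphi}\LpnormOver{1}{\Omega}{f}$, i.e.\ $T$ has weak-$(1,1)$ norm at most $2M_{\varphi}$.

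It then suffices to apply the Marcinkiewicz interpolation theorem \cite{Grafakos} to the linear operator $T$ between the weak type $(1,1)$ bound (constant $\lesssim M_{\varphi}$) and the weak type $(2,2)$ bound (constant $1$). Writing $\tfrac{1}{p} = \tfrac{1-\theta}{1} + \tfrac{\theta}{2}$ one finds $1 - \theta = \tfrac{2-p}{p}$, so the resulting strong-type constant is $\lesssim_{p} M_{\varphi}^{(2-p)/p}\cdot 1^{\theta} = M_{\varphi}^{(2-p)/p}$, which is exactly \eqref{ineq:PaleyType}. The main work is the weak $(1,1)$ step, in particular the estimate $\sum_{\varphi(\kappa) < N}\varphi(\kappa)^{2} \lesssim M_{\varphi}N$; the remaining ingredients (Plancherel, the trivial $L^{1}$-bound on $\FourierCoef{f}$, and the interpolation bookkeeping) are routine.
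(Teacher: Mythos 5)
Your proposal is correct and follows essentially the same route as the paper: the same weighted counting measure $\nu\{\kappa\}=\varphi(\kappa)^{2}$, the same operator $Tf(\kappa)=\FourierCoef{f}(\kappa)/\varphi(\kappa)$, the same two endpoints (a $(2,2)$ bound from Plancherel and a weak $(1,1)$ bound with constant $\lesssim M_{\varphi}$ obtained via the layer-cake/Tonelli estimate $\sum_{\varphi(\kappa)<N}\varphi(\kappa)^{2}\leq 2M_{\varphi}N$), and the same Marcinkiewicz interpolation bookkeeping giving the exponent $\tfrac{2-p}{p}$. The only differences are cosmetic (strict versus non-strict inequalities in the level sets, and writing the layer-cake identity with $2s\,\mathrm{d}s$ instead of substituting $t=\sqrt{\tau}$).
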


\begin{proof}
    Note that the case \(p=2\) is fulfilled by Plancherel's formula \eqref{Plancherel}. So now we suppose \(1 < p < 2\). We define a measure \(\mu\) on \(\dualLat{L}\) by \(\mu\{\kappa\} = \varphi(\kappa)^{2}\) for every \(\kappa \in \dualLat{L}\). Consider the (sub)linear operator \(A\) defined by
    \[Af(\kappa) = \frac{\FourierCoef{f}(\kappa)}{\varphi(\kappa)}.\]
    
    Now, our goal is to show that \(A\) is of weak type \((1,1)\) and \((2,2)\) so that the result follows from Marcinkiewicz' interpolation theorem \cite[Theorem 1.3.2]{Grafakos}. More specifically, we will see for every \(s > 0\) that
    \[\mu\{\kappa \in \dualLat{L} : \abs{Af(\kappa)} \geq s\} \leq \left(\frac{\LpnormOver{2}{\Omega}{f}}{s}\right)^{2}\]
    and
    \[\mu\{\kappa \in \dualLat{L} : \abs{Af(\kappa)} \geq s\} \leq 2 M_{\varphi} \frac{\LpnormOver{1}{\Omega}{f}}{s}.\]
    
    Because of the Plancherel formula \(\eqref{Plancherel}\) we obtain for every \(f \in L^{2}(\Omega)\) that
    \begin{align*}
        s^{2} \mu\{\kappa \in \dualLat{L} : \abs{Af(\kappa)} \geq s\} &= s^{2} \sum_{\substack{\kappa \in \dualLat{L} \\ \abs{Af(\kappa)} \geq s}} \varphi(\kappa)^{2} \\
        &\leq \sum_{\kappa \in \dualLat{L}} \abs{Af(\kappa)}^{2} \varphi(\kappa)^{2} \\
        &= \sum_{\kappa \in \dualLat{L}} |\FourierCoef{f}(\kappa)|^{2} \\
        &= \LpnormOver{2}{\Omega}{f}^{2}.
    \end{align*}
    This proves that \(A\) is of weak type \((2,2)\).
    
    Now note that \(\abs{Af(\kappa)} = |\FourierCoef{f}(\kappa)|/\varphi(\kappa) \leq \LpnormOver{1}{\Omega}{f}/\varphi(\kappa)\) by the Hausdorff-Young inequality \eqref{eq:HY}. Hence for \(f \in L^{1}(\Omega)\) we have
    \[\mu\{\kappa \in \dualLat{L} : \abs{Af(\kappa)} \geq s\} \leq \mu\Bigg\{\kappa \in \dualLat{L} : \varphi(\kappa) \leq \frac{\LpnormOver{1}{\Omega}{f}}{s}\Bigg\} = \sum_{\substack{\kappa \in \dualLat{L} \\ \varphi(\kappa) \leq \sigma}} \varphi(\kappa)^{2},\]
    where we set \(\sigma := \LpnormOver{1}{\Omega}{f}/s\). Next we find
    \[\sum_{\substack{\kappa \in \dualLat{L} \\ \varphi(\kappa) \leq \sigma}} \varphi(\kappa)^{2} = \sum_{\substack{\kappa \in \dualLat{L} \\ \varphi(\kappa) \leq \sigma}} \int_{0}^{\varphi(\kappa)^{2}} \, \mathrm{d}\tau \leq \int_{0}^{\sigma^{2}} \sum_{\substack{\kappa \in \dualLat{L} \\ \tau^{1/2} \leq \varphi(\kappa) \leq \sigma}} 1 \, \mathrm{d}\tau,\]
    where we used Fubini's theorem and enlarged the `domain of double integration', viewing summation as an integral with respect to the counting measure. Finally, the substitution \(t = \sqrt{\tau}\) gives
    \[\int_{0}^{\sigma^{2}} \sum_{\substack{\kappa \in \dualLat{L} \\ \tau^{1/2} \leq \varphi(\kappa) \leq \sigma}} 1 \, \mathrm{d}\tau = 2 \int_{0}^{\sigma} t \sum_{\substack{\kappa \in \dualLat{L} \\ t \leq \varphi(\kappa) \leq \sigma}} 1 \, \mathrm{d}t \leq 2 \int_{0}^{\sigma} t \sum_{\substack{\kappa \in \dualLat{L} \\ t \leq \varphi(\kappa)}} 1 \, \mathrm{d}t \leq 2 M_{\varphi} \sigma.\]
    We thus found for every \(f \in L^{2}(\Omega)\) that
    \[\mu\{\kappa \in \dualLat{L} : \abs{Af(\kappa)} \geq s\} \leq 2 M_{\varphi} \frac{\LpnormOver{1}{\Omega}{f}}{s},\]
    which means that \(A\) is of weak type \((1,1)\).
    
    Remark that the Marcinkiewicz' interpolation theorem \cite[Theorem 1.3.2]{Grafakos} also ensures that the hidden constant in inequality \eqref{ineq:PaleyType} only depends on \(p\), since we eliminated the dependence on \(\varphi\) by explicitly writing the factor \(M_{\varphi}^{\frac{2-p}{p}}\).
\end{proof}

\subsection{Hardy-Littlewood inequality}\label{subsection:HardyLittleIneq}

As an intermezzo we include the Hardy-Littlewood inequality since it follows by a simple application of the Paley inequality. It expresses that we have a weighted version of Plancherel's formula \eqref{Plancherel} for \(L^{p}(\Omega)\) with \(1 < p \leq 2\) if the weight decays sufficiently fast. To this end we apply a similar reasoning as in the proof of \cite[Theorem 3.5]{KR}.

\begin{theorem}[Hardy-Littlewood inequality]\label{th:HardyLittle}
    Let \(1 < p \leq 2\), and let \(\varphi(\kappa)\) be a positive function on \(\dualLat{L}\) growing sufficiently fast in the sense that
    \[\sum_{\kappa \in \dualLat{L}} \frac{1}{\varphi(\kappa)^{\beta}} < \infty \quad \text{for some} \quad \beta > 0.\]
    Then for every \(f \in L^{p}(\Omega)\) it holds that
    \[\left(\sum_{\kappa \in \dualLat{L}} |\FourierCoef{f}(\kappa)|^{p} \, \varphi(\kappa)^{\beta(p-2)}\right)^{\frac{1}{p}} \lesssim_{p,\varphi} \LpnormOver{p}{\Omega}{f}.\]
\end{theorem}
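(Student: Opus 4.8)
The plan is to derive the statement directly from Paley's inequality (\cref{th:PaleyType}) by supplying it with a carefully chosen weight. The key observation is that the summability hypothesis on \(\varphi\) is exactly an \(\ell^{1}\)-condition on an auxiliary function. Concretely, I would set
\[\psi(\kappa) := \frac{1}{\varphi(\kappa)^{\beta}} \qquad (\kappa \in \dualLat{L}),\]
which is again a positive function on \(\dualLat{L}\), and observe that by assumption \(\psi \in \ell^{1}(\dualLat{L})\) with \(\lpnormOver{1}{\dualLat{L}}{\psi} = \sum_{\kappa \in \dualLat{L}} \varphi(\kappa)^{-\beta} < \infty\).

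Next I would check that \(\psi\) satisfies the hypothesis of \cref{th:PaleyType}, i.e.\ that \(M_{\psi} := \sup_{s>0} s \sum_{\kappa : \psi(\kappa) \geq s} 1 < \infty\). This is a Chebyshev-type estimate: for every \(s > 0\),
\[s \sum_{\substack{\kappa \in \dualLat{L} \\ \psi(\kappa) \geq s}} 1 \leq \sum_{\substack{\kappa \in \dualLat{L} \\ \psi(\kappa) \geq s}} \psi(\kappa) \leq \lpnormOver{1}{\dualLat{L}}{\psi},\]
so \(M_{\psi} \leq \sum_{\kappa \in \dualLat{L}} \varphi(\kappa)^{-\beta} < \infty\). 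Then \cref{th:PaleyType} applied with the weight \(\psi\) gives, for every \(f \in L^{p}(\Omega)\),
\[\left( \sum_{\kappa \in \dualLat{L}} |\FourierCoef{f}(\kappa)|^{p} \, \psi(\kappa)^{2-p} \right)^{\frac{1}{p}} \lesssim M_{\psi}^{\frac{2-p}{p}} \LpnormOver{p}{\Omega}{f}.\]
Since \(\psi(\kappa)^{2-p} = \varphi(\kappa)^{-\beta(2-p)} = \varphi(\kappa)^{\beta(p-2)}\), the left-hand side is precisely the quantity appearing in the statement, and bounding \(M_{\psi}^{(2-p)/p}\) by the constant \(\bigl(\sum_{\kappa} \varphi(\kappa)^{-\beta}\bigr)^{(2-p)/p}\) (which depends only on \(p\), \(\beta\) and \(\varphi\)) yields
\[\left(\sum_{\kappa \in \dualLat{L}} |\FourierCoef{f}(\kappa)|^{p} \, \varphi(\kappa)^{\beta(p-2)}\right)^{\frac{1}{p}} \lesssim_{p,\varphi} \LpnormOver{p}{\Omega}{f},\]
as desired. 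The case \(p = 2\) needs no separate treatment, since there both sides collapse to the Plancherel identity \eqref{Plancherel}.

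I do not expect a real obstacle here: the entire content is the recognition that \(\varphi^{-\beta} \in \ell^{1}(\dualLat{L})\) forces the Paley constant \(M_{\psi}\) to be finite, after which the result is immediate. The only points requiring a line of care are the exponent bookkeeping \(\psi^{2-p} = \varphi^{\beta(p-2)}\) and the remark that the \(\lesssim_{p,\varphi}\) in the conclusion genuinely absorbs the \(\varphi\)-dependent factor \(M_{\psi}^{(2-p)/p}\) — a dependence that, exactly as for Paley's inequality, could be displayed explicitly by keeping the factor \(\bigl(\sum_{\kappa} \varphi(\kappa)^{-\beta}\bigr)^{(2-p)/p}\) on the right-hand side.
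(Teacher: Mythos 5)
Your proposal is correct and coincides with the paper's own proof: both set \(\psi = \varphi^{-\beta}\), verify \(M_{\psi} \leq \sum_{\kappa} \varphi(\kappa)^{-\beta} < \infty\) by the same Chebyshev-type estimate, and then invoke Paley's inequality (\cref{th:PaleyType}) with this weight, the exponent identity \(\psi^{2-p} = \varphi^{\beta(p-2)}\) doing the rest. Your closing remark about where the \(\varphi\)-dependence of the implied constant comes from matches the paper's own comment as well.
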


\begin{proof}
    Put
    \[C := \sum_{\kappa \in \dualLat{L}} \frac{1}{\varphi(\kappa)^{\beta}} < \infty.\]
    Then we find for any \(s > 0\) that
    \[C \geq \sum_{\substack{\kappa \in \dualLat{L} \\ \varphi(\kappa)^{\beta} \leq \frac{1}{s}}} \frac{1}{\varphi(\kappa)^{\beta}} \geq s \sum_{\substack{\kappa \in \dualLat{L} \\ \varphi(\kappa)^{\beta} \leq \frac{1}{s}}} 1 = s \sum_{\substack{\kappa \in \dualLat{L} \\ \frac{1}{\varphi(\kappa)^{\beta}} \geq s}} 1\]
    so that
    \[\sup_{s > 0} s \sum_{\substack{\kappa \in \dualLat{L} \\ \frac{1}{\varphi(\kappa)^{\beta}} \geq s}} 1 \leq C < \infty.\]
    Hence, by Paley's inequality \eqref{ineq:PaleyType}, applied to the function \(1/\varphi(\kappa)^{\beta}\), we obtain for every \(f \in L^{p}(\Omega)\) that
    \[\left(\sum_{\kappa \in \dualLat{L}} |\FourierCoef{f}(\kappa)|^{p} \, \varphi(\kappa)^{\beta(p-2)}\right)^{\frac{1}{p}} \lesssim_{p,\varphi} \LpnormOver{p}{\Omega}{f}.\]
    It is clear that the hidden constant in the Hardy-Littlewood inequality indeed also depends on \(\varphi\) since we did not include the factor \(M_{1/\varphi(\kappa)^{\beta}}^{\frac{2-p}{p}}\) from Paley's inequality \eqref{ineq:PaleyType}.
\end{proof}

\subsection{Hausdorff-Young-Paley inequality}\label{subsection:HYPIneq}

We conclude this section with the Hausdorff-Young-Paley inequality. In \cite{Hormander1960} Lars Hörmander applied the Euclidean version of this inequality to prove the \(L^{p}\)-\(L^{q}\) boundedness of Fourier multipliers. It will also play a crucial role in our arguments in the next section.

Before stating the Hausdorff-Young-Paley inequality we mention the following important interpolation result, which we will use in our proof.

\begin{theorem}[{Stein-Weiss, \cite[Corollary 5.5.4]{InterpolSpaces}}]\label{th:realInterpolation}
    Consider two measure spaces \((X,\mu)\) and \((Y,\nu)\). Let \(1 \leq p_{0}, p_{1}, q_{0}, q_{1} < \infty\). Assume that the linear operator \(A\) defined on \(L^{p_{0}}(X, w_{0} \, \mathrm{d}\mu) + L^{p_{1}}(X, w_{1} \, \mathrm{d}\mu)\) satisfies
    \[\LpnormOver{q_{0}}{Y, \widetilde{w}_{0} \, \mathrm{d}\nu}{Af} \leq M_{0} \LpnormOver{p_{0}}{X, w_{0} \, \mathrm{d}\mu}{f} \quad \text{for all} \quad f \in L^{p_{0}}(X, w_{0} \, \mathrm{d}\mu)\]
    and
    \[\LpnormOver{q_{1}}{Y, \widetilde{w}_{1} \, \mathrm{d}\nu}{Af} \leq M_{1} \LpnormOver{p_{1}}{X, w_{1} \, \mathrm{d}\mu}{f} \quad \text{for all} \quad f \in L^{p_{1}}(X, w_{1} \, \mathrm{d}\mu)\]
    for some \(M_{0} > 0\) and \(M_{1} > 0\), where \(w_{j}\) and \(\widetilde{w}_{j}\) are weight functions for \(j \in \{0,1\}\), i.e.\ positive measurable functions. Let \(0 \leq \theta \leq 1\) be arbitrary, and let
    \[\frac{1}{p} = \frac{1-\theta}{p_{0}} + \frac{\theta}{p_{1}} \quad \text{and} \quad \frac{1}{q} = \frac{1-\theta}{q_{0}} + \frac{\theta}{q_{1}}.\]
    Then \(A\) extends to a bounded linear operator \(A : L^{p}(X, w \, \mathrm{d}\mu) \to L^{q}(Y, \widetilde{w} \, \mathrm{d}\nu)\) satisfying
    \[\LpnormOver{q}{Y, \widetilde{w} \, \mathrm{d}\nu}{Af} \leq M_{0}^{1-\theta} M_{1}^{\theta} \LpnormOver{p}{X, w \, \mathrm{d}\mu}{f} \quad \text{for all} \quad f \in L^{p}(X, w \, \mathrm{d}\mu),\]
    where
    \[w = w_{0}^{\frac{p(1-\theta)}{p_{0}}} w_{1}^{\frac{p\theta}{p_{1}}} \quad \text{and} \quad \widetilde{w} = \widetilde{w}_{0}^{\frac{q(1-\theta)}{q_{0}}} \widetilde{w}_{1}^{\frac{q\theta}{q_{1}}}.\]
\end{theorem}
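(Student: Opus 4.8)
The plan is to reprove this as the classical Stein--Weiss interpolation theorem with change of measure, via Stein's analytic-family refinement of the three-lines lemma; the hands-on version exhibits the weights and the constant $M_0^{1-\theta}M_1^{\theta}$ most transparently, though one could equally quote Calderón's identification $[L^{p_0}(X,w_0\,d\mu),L^{p_1}(X,w_1\,d\mu)]_{[\theta]}=L^{p}(X,w\,d\mu)$ of complex interpolation spaces of weighted Lebesgue spaces together with the exact interpolation property of the complex method. We may assume $0<\theta<1$, the endpoint cases $\theta\in\{0,1\}$ being exactly the two hypotheses.

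First I would reduce to a normalised bilinear inequality on a dense class. Using the duality $\|u\|_{L^q(Y,\tilde w\,d\nu)}=\sup\{\,|\int_Y u\,\overline h\,\tilde w\,d\nu| : h\text{ simple},\ \|h\|_{L^{q'}(Y,\tilde w\,d\nu)}\le1\}$ (valid for $1\le q<\infty$), it suffices to show
\[
\Big|\int_Y(Af)\,\overline h\,\tilde w\,d\nu\Big|\le M_0^{1-\theta}M_1^{\theta}
\]
whenever $\|f\|_{L^p(X,w\,d\mu)}=\|h\|_{L^{q'}(Y,\tilde w\,d\nu)}=1$ and $f,h$ are simple functions supported on sets of finite measure on which all of $w_0,w_1,\tilde w_0,\tilde w_1$ are bounded above and below by positive constants. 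That this class is dense in the respective spaces --- and that its members therefore belong to the domain $L^{p_0}(X,w_0\,d\mu)+L^{p_1}(X,w_1\,d\mu)$ of $A$ --- is a routine, if slightly fussy, measure-theoretic reduction. Writing $F:=w^{1/p}f$, $G:=\tilde w^{1/q'}h$ and using the pointwise identities $w^{-1/p}=w_0^{-(1-\theta)/p_0}w_1^{-\theta/p_1}$, $\tilde w^{1/q}=\tilde w_0^{(1-\theta)/q_0}\tilde w_1^{\theta/q_1}$ (immediate from the formulas for $w,\tilde w$ and the relations defining $p,q$), the quantity to bound becomes $\big|\int_Y A(w_0^{-(1-\theta)/p_0}w_1^{-\theta/p_1}F)\,\tilde w_0^{(1-\theta)/q_0}\tilde w_1^{\theta/q_1}\,\overline G\,d\nu\big|$, with $\|F\|_{L^p(X,\mu)}=\|G\|_{L^{q'}(Y,\nu)}=1$.

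Next I would build the analytic family. Put $P(z):=\tfrac{1-z}{p_0}+\tfrac{z}{p_1}$ and $R(z):=\tfrac{1-z}{q_0'}+\tfrac{z}{q_1'}$, so $P(\theta)=1/p$ and $R(\theta)=1/q'$; for $F=\sum_k a_k\chi_{E_k}$ and $G=\sum_j b_j\chi_{F_j}$ set $F_z:=\sum_k|a_k|^{pP(z)}\operatorname{sgn}(a_k)\chi_{E_k}$, $G_z:=\sum_j|b_j|^{q'R(z)}\operatorname{sgn}(b_j)\chi_{F_j}$, and
\[
\Phi(z):=\int_Y A\big(w_0^{-(1-z)/p_0}w_1^{-z/p_1}F_z\big)\,\tilde w_0^{(1-z)/q_0}\tilde w_1^{z/q_1}\,\overline{G_z}\,d\nu .
\]
Since $F,G$ are simple with finite-measure supports and $A$ is linear, $\Phi$ is a finite linear combination of entire functions of $z$, bounded and continuous on the closed strip $0\le\Re z\le1$ (two-sided boundedness of the weights on the relevant supports keeps all complex weight powers under control). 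On $\Re z=0$, Hölder's inequality pairing $L^{q_0}(Y,\tilde w_0\,d\nu)$ with $L^{q_0'}(Y,\tilde w_0\,d\nu)$, combined with the first hypothesis, gives $|\Phi(it)|\le M_0$ once one checks --- using $\Re P(it)=1/p_0$ and $\Re R(it)=1/q_0'$ --- that the two factor-norms entering the estimate both equal $1$; symmetrically $|\Phi(1+it)|\le M_1$. By the Hadamard three-lines lemma, $|\Phi(\theta)|\le M_0^{1-\theta}M_1^{\theta}$; and since $F_\theta=F$ and $G_\theta=G$, the value $\Phi(\theta)$ is exactly the bilinear quantity we needed to estimate. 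The full boundedness of $A$ on $L^p(X,w\,d\mu)$ with the asserted norm bound then follows by density, and the constant $M_0^{1-\theta}M_1^{\theta}$ is precisely the output of the three-lines lemma, which is why the weights $w,\tilde w$ in the conclusion must be the ones given.

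I expect the main obstacle to be technical rather than conceptual: verifying that $\Phi$ is a genuine admissible analytic family on the strip in the presence of complex powers of the weights $w_0,w_1,\tilde w_0,\tilde w_1$, which need not be bounded in general, and, intertwined with this, justifying the reduction to simple functions confined to sets on which those weights are two-sidedly bounded, so that every intermediate expression stays inside the domain of $A$ and the boundary norm computations are legitimate. Once those reductions are in place, the two boundary estimates and the application of the three-lines lemma are entirely routine.
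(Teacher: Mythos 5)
The paper itself offers no proof of this statement: it is imported wholesale from \cite[Corollary 5.5.4]{InterpolSpaces}, where it is deduced from Calder\'on's identification of the complex interpolation space of the weighted Lebesgue spaces $L^{p_0}(X,w_0\,\mathrm{d}\mu)$ and $L^{p_1}(X,w_1\,\mathrm{d}\mu)$ with $L^{p}(X,w\,\mathrm{d}\mu)$, together with the exact interpolation property of the complex method. Your proposal instead gives the direct Stein--Weiss three-lines argument (the alternative you yourself mention), and it is correct in outline: the normalisations are right, $F_\theta=F$, $G_\theta=G$, $\Phi(\theta)$ is the desired pairing, the boundary computations with $\Re P(it)=1/p_0$ and $\Re R(it)=1/q_0'$ do produce unit norms, and the three-lines lemma delivers exactly the constant $M_0^{1-\theta}M_1^{\theta}$ and forces the stated weights $w,\widetilde{w}$. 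Two places need more care than the sketch admits. First, $\Phi$ is \emph{not} literally a finite linear combination of entire scalar functions, because the complex powers $w_0^{-(1-z)/p_0}w_1^{-z/p_1}$ sit inside the argument of $A$ and vary over each $E_k$; analyticity should be established by showing that $z\mapsto w_0^{-(1-z)/p_0}w_1^{-z/p_1}F_z$ is an entire $L^{p_0}(X,w_0\,\mathrm{d}\mu)$-valued map (which your two-sided weight bounds and finite-measure supports make routine) and then composing with the bounded operator $A$ and the analytic dual factor --- you correctly flag this as the main obstacle, and the fix is standard. Second, the degenerate case $q_0=q_1=1$, where $q'=\infty$ and the exponent $q'R(z)$ is meaningless, must be handled separately (take $G_z\equiv G$ there); similarly the boundedness of $\Phi$ on the whole closed strip, needed for the three-lines lemma, should be stated explicitly (it follows from either endpoint hypothesis plus the uniform bounds on $u_z$ over $0\le\Re z\le 1$). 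With those repairs your argument is a complete, self-contained proof; what the paper's citation buys instead is brevity, at the cost of hiding all of this bookkeeping inside the interpolation-functor machinery.
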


We are now ready to state and prove the Hausdorff-Young-Paley inequality, which is in a sense an interpolated version of the Hausdorff-Young inequality \eqref{eq:HY} and Paley's inequality \eqref{ineq:PaleyType}. We follow the same line of reasoning as in the proof of \cite[Theorem 4.6]{CKNR}.

\begin{theorem}[Hausdorff-Young-Paley inequality]\label{th:HYP}
    Let \(1 < p \leq 2\), and let \(1 < p \leq b \leq p' < \infty\) with \(\frac{1}{p} + \frac{1}{p'} = 1\). If \(\varphi(\kappa)\) is a positive function on \(\dualLat{L}\) such that
    \[M_{\varphi} := \sup_{s > 0} s \sum\limits_{\substack{\kappa \in \dualLat{L} \\ \varphi(\kappa) \geq s}} 1 < \infty,\]
    then for every \(f \in L^{p}(\Omega)\) we have
    \begin{equation}\label{ineq:HYP}
        \left(\sum_{\kappa \in \dualLat{L}} \abs{\FourierCoef{f}(\kappa) \, \varphi(\kappa)^{\frac{1}{b} - \frac{1}{p'}}}^{b}\right)^{\frac{1}{b}} \lesssim M_{\varphi}^{\frac{1}{b} - \frac{1}{p'}} \LpnormOver{p}{\Omega}{f}.
    \end{equation}
\end{theorem}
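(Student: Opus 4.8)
The plan is to derive \eqref{ineq:HYP} by interpolating, via the Stein-Weiss theorem (\cref{th:realInterpolation}), between the Hausdorff-Young inequality \eqref{eq:HY} and Paley's inequality \eqref{ineq:PaleyType}; these are exactly the endpoint cases $b = p'$ and $b = p$ of \eqref{ineq:HYP}. First I would dispose of the degenerate case $p = 2$: then the hypothesis $1 < p \le b \le p'$ forces $b = 2$ and the exponent $\tfrac{1}{b} - \tfrac{1}{p'}$ vanishes, so \eqref{ineq:HYP} is nothing but the Plancherel formula \eqref{Plancherel}. Hence I may assume $1 < p < 2$, so that $p < p'$ and the interval $[p,p']$ from which $b$ is drawn is nondegenerate.

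Next I would package the two endpoints in the form required by \cref{th:realInterpolation}. Let $(Y,\nu) = (\dualLat{L},\,\text{counting measure})$ and let $A = \FTFund$ be the (linear) Fourier transform, understood as its bounded extension to $L^{p}(\Omega)$ furnished by \cref{th:HY}. The Hausdorff-Young inequality \eqref{eq:HY} states that $A\colon L^{p}(\Omega)\to L^{p'}(\dualLat{L},\,\mathrm{d}\nu)$ is bounded by $1$; this is the endpoint with $p_{0}=p$, $q_{0}=p'$, trivial weights $w_{0}=\widetilde{w}_{0}\equiv 1$, and $M_{0}=1$. Paley's inequality \eqref{ineq:PaleyType}, with the factor $M_{\varphi}^{\frac{2-p}{p}}$ kept on the right, states that $A\colon L^{p}(\Omega)\to L^{p}(\dualLat{L},\,\varphi^{2-p}\,\mathrm{d}\nu)$ is bounded by a constant $\lesssim_{p} M_{\varphi}^{\frac{2-p}{p}}$; this is the endpoint with $p_{1}=p$, $q_{1}=p$, source weight $w_{1}\equiv 1$, target weight $\widetilde{w}_{1}=\varphi^{2-p}$, and $M_{1}\lesssim_{p}M_{\varphi}^{\frac{2-p}{p}}$.

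Then I would apply \cref{th:realInterpolation} with $\theta\in[0,1]$ chosen so that the interpolated target exponent $q$ equals $b$. Since $p_{0}=p_{1}=p$, the interpolated source exponent is again $p$ and the interpolated source weight $w$ is identically $1$, so the right-hand side of the conclusion is a constant times $\LpnormOver{p}{\Omega}{f}$. From $\tfrac{1}{q}=\tfrac{1-\theta}{p'}+\tfrac{\theta}{p}$ and $\tfrac{1}{p}-\tfrac{1}{p'}=\tfrac{2-p}{p}$, imposing $q=b$ yields $\theta=\tfrac{p}{2-p}\bigl(\tfrac{1}{b}-\tfrac{1}{p'}\bigr)$, and the constraint $p\le b\le p'$ is precisely what guarantees $0\le\theta\le1$. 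The interpolated target weight is $\widetilde{w}=\widetilde{w}_{0}^{\,b(1-\theta)/p'}\,\widetilde{w}_{1}^{\,b\theta/p}=\varphi^{(2-p)b\theta/p}=\varphi^{\,b\left(\frac{1}{b}-\frac{1}{p'}\right)}$, using $(2-p)\theta/p=\tfrac{1}{b}-\tfrac{1}{p'}$; consequently the $L^{b}(\dualLat{L},\,\widetilde{w}\,\mathrm{d}\nu)$-norm of $\FourierCoef{f}$ is exactly the left-hand side of \eqref{ineq:HYP}. Therefore \cref{th:realInterpolation} gives
\[\left(\sum_{\kappa \in \dualLat{L}} \abs{\FourierCoef{f}(\kappa)\,\varphi(\kappa)^{\frac{1}{b} - \frac{1}{p'}}}^{b}\right)^{\frac{1}{b}} \le M_{0}^{1-\theta}\,M_{1}^{\theta}\,\LpnormOver{p}{\Omega}{f},\]
and $M_{0}^{1-\theta}M_{1}^{\theta}\lesssim_{p}\bigl(M_{\varphi}^{\frac{2-p}{p}}\bigr)^{\theta}=M_{\varphi}^{\frac{1}{b}-\frac{1}{p'}}$, which is the asserted dependence on $\varphi$; the residual constant depends only on $p$ and $b$.

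The argument is essentially bookkeeping once the endpoints are correctly packaged. The one point needing care is checking $\theta\in[0,1]$ from $p\le b\le p'$ and matching the weight exponents: in particular, recognising that Paley's inequality, written with $M_{\varphi}^{\frac{2-p}{p}}$ factored out, is a \emph{weighted} $L^{p}\to L^{p}$ estimate with \emph{trivial source weight}, which is exactly what makes it compatible with Hausdorff-Young as a Stein-Weiss endpoint (same source exponent, same trivial source weight). I do not anticipate any genuine analytic difficulty beyond invoking \cref{th:HY}, \cref{th:PaleyType}, and \cref{th:realInterpolation}.
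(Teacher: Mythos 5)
Your proposal is correct and follows essentially the same route as the paper: both treat $p=2$ via Plancherel and then obtain the general case by Stein--Weiss interpolation (\cref{th:realInterpolation}) between the Hausdorff--Young endpoint $b=p'$ and the Paley endpoint $b=p$, with the weight and constant bookkeeping matching exactly. The only cosmetic difference is that you label Hausdorff--Young as the $\theta=0$ endpoint where the paper uses Paley, so your $\theta$ is the paper's $1-\theta$.
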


\begin{proof}
    Plancherel's identity \eqref{Plancherel} treats the case \(p=2\), so assume that \(p < 2\). Paley's inequality \eqref{ineq:PaleyType} ensures that the linear operator \(\FTFund : L^{p}(\Omega) \to \ell^{p}(\dualLat{L}, w_{0})\) with \(\FTFund f(\kappa) = \FourierCoef{f}(\kappa)\) is bounded, where \(w_{0}(\kappa) = \varphi(\kappa)^{2-p}\). Next, we note that \(\FTFund : L^{p}(\Omega) \to \ell^{p'}(\dualLat{L})\) is also bounded because of the Hausdorff-Young inequality \eqref{eq:HY}.
    Now, choose \(0 \leq \theta \leq 1\) such that \(\frac{1}{b} = \frac{1 - \theta}{p} + \frac{\theta}{p'}\), so \(\theta = \left(\frac{1}{p} - \frac{1}{b}\right)/\left(\frac{1}{p} - \frac{1}{p'}\right)\). Then \cref{th:realInterpolation} tells us that \(\FTFund : L^{p}(\Omega) \to \ell^{b}(\dualLat{L},w)\) is bounded, where
    \[w(\kappa) = w_{0}(\kappa)^{\frac{b(1-\theta)}{p}} = \left(\varphi(\kappa)^{\frac{1}{b} - \frac{1}{p'}}\right)^{b}.\]
    Here we note that \cref{th:realInterpolation} ensures that the hidden constant in the Hausdorff-Young-Paley inequality only depends on \(p\) and \(b\), since the dependence on \(\varphi(\kappa)\) is contained in the factor \(M_{\varphi}^{\frac{1}{b} - \frac{1}{p'}}\).
\end{proof}

\section{\texorpdfstring{\(L^p\)}{Lp}-\texorpdfstring{\(L^q\)}{Lq} boundedness of Fourier multipliers for \texorpdfstring{\(1 < p,q < \infty\)}{1 < p,q < infty}}\label{section:BoundFourierMult}

We are finally ready to accomplish our main goal, namely establishing the \(L^{p}\)-\(L^{q}\) boundedness of Fourier multipliers on fundamental domains of lattices in \(\R^{d}\). In this section we prove this for \(1 < p,q < \infty\) under a Hörmander-type condition, which is inspired by Lars Hörmander's result \cite[Theorem 1.11]{Hormander1960}.

We start by defining Fourier multipliers, which intuitively correspond to multiplication by a function in the (multidimensional) `frequency space'.

\begin{definition}
    An operator \(A : C^{\infty}(\Omega) \to C^{\infty}(\Omega)\) is called a \emph{Fourier multiplier} with symbol \(\sigma : \dualLat{L} \to \C\) if for all \(f \in C^{\infty}(\Omega)\) and \(\kappa \in \dualLat{L}\) it holds that
    \begin{equation}\label{eq:FourierMultProp}
        \FourierCoef{Af}(\kappa) = \sigma(\kappa) \, \FourierCoef{f}(\kappa).
    \end{equation}
\end{definition}

\begin{example}
    A typical example of a Fourier multiplier is any linear partial differential operator with constant coefficients, i.e.\ \(A = \sum_{\abs{\alpha} \leq k} a_{\alpha} \partial^{\alpha}\) for some \(k \in \N\), as the Fourier transform converts derivatives into polynomials. Here we used for \(\alpha \in \N^{d}\) the multi-index notations \(\partial^{\alpha} = \partial_{x_{1}}^{\alpha_{1}} \dots \partial_{x_{d}}^{\alpha_{d}}\) and \(\abs{\alpha} = \alpha_{1} + \dots + \alpha_{d}\).
\end{example}

\begin{remark}
    Since \(\FTFund : C^{\infty}(\Omega) \to \schwartzSpace(\dualLat{L})\) is invertible, a Fourier multiplier \(A\) satisfies the relation
    \begin{equation}\label{eq:FourierMultPseudo}
        Af(x) = \FTFund^{-1}[\sigma(\kappa) \, \FourierCoef{f}(\kappa)](x) = \sum_{\kappa \in \dualLat{L}} \sigma(\kappa) \, \FourierCoef{f}(\kappa) \, e^{2 \pi i \kappa \cdot x}.
    \end{equation}
    This is a special case of a pseudo-differential operator on \(\Omega\), which also allows symbols \(\sigma(x,\kappa)\) that depend on both \(x\) and \(\kappa\). Remark that every Fourier multiplier is a linear operator as a consequence of \eqref{eq:FourierMultPseudo}. Moreover, a straightforward verification using \eqref{eq:FourierMultProp} shows that any Fourier multiplier \(A : C^{\infty} \to C^{\infty}\) is continuous if the symbol has at most polynomial growth. One can easily verify that the condition on the symbol in the following theorems always restrict the symbol to be bounded, so we always consider continuous Fourier multipliers.
\end{remark}

We start by treating the case \(p = q = 2\) separately. This is the only case where \(p = q\) that we will discuss.

\begin{theorem}\label{th:boundednessForPTwo}
    Let \(A : C^{\infty}(\Omega) \to C^{\infty}(\Omega)\) be a Fourier multiplier with bounded symbol \(\sigma \in \ell^{\infty}(\dualLat{L})\). Then we have
    \[\operatorNormLpLq{2}{2}{A} := \sup_{f \neq 0} \frac{\LpnormOver{2}{\Omega}{Af}}{\LpnormOver{2}{\Omega}{f}} \leq \lpnormOver{\infty}{\dualLat{L}}{\sigma}.\]
    In particular, \(A\) can be extended to a bounded linear operator from \(L^{2}(\Omega)\) to \(L^{2}(\Omega)\).
\end{theorem}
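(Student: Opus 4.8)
The plan is to use the Plancherel formula \eqref{Plancherel} on both sides of the inequality and exploit the defining property \eqref{eq:FourierMultProp} of a Fourier multiplier. This is the $\ell^2$-$\ell^2$ multiplier theorem transported through the (isometric) Fourier transform on $L^2(\Omega)$, and the argument is essentially one line once Plancherel is available.

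Concretely, I would proceed as follows. Take an arbitrary $f \in C^\infty(\Omega)$, which is a legitimate input for $A$. By \eqref{eq:FourierMultProp} we have $\FourierCoef{Af}(\kappa) = \sigma(\kappa)\,\FourierCoef{f}(\kappa)$ for every $\kappa \in \dualLat{L}$. Applying the Plancherel formula \eqref{Plancherel} to the function $Af$ (which lies in $L^2(\Omega)$ since $C^\infty(\Omega) \subseteq L^2(\Omega)$ by compactness of $\Omega$), we get
\[
\LpnormOver{2}{\Omega}{Af}^2 = \sum_{\kappa \in \dualLat{L}} |\FourierCoef{Af}(\kappa)|^2 = \sum_{\kappa \in \dualLat{L}} |\sigma(\kappa)|^2\,|\FourierCoef{f}(\kappa)|^2 \leq \lpnormOver{\infty}{\dualLat{L}}{\sigma}^2 \sum_{\kappa \in \dualLat{L}} |\FourierCoef{f}(\kappa)|^2.
\]
Applying Plancherel \eqref{Plancherel} once more, this time to $f$ itself, the last sum equals $\LpnormOver{2}{\Omega}{f}^2$, so $\LpnormOver{2}{\Omega}{Af} \leq \lpnormOver{\infty}{\dualLat{L}}{\sigma}\,\LpnormOver{2}{\Omega}{f}$. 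Taking the supremum over all nonzero $f \in C^\infty(\Omega)$ gives the claimed bound on $\operatorNormLpLq{2}{2}{A}$.

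For the final sentence about the bounded extension: since $C^\infty(\Omega)$ is dense in $L^2(\Omega)$ (noted in the excerpt after \cref{th:toroidalFT}) and $A$ is a bounded linear operator from the dense subspace $C^\infty(\Omega)$ (with the $L^2$ norm) into $L^2(\Omega)$, the standard B.L.T.\ (bounded linear transformation) extension theorem yields a unique bounded linear extension $A : L^2(\Omega) \to L^2(\Omega)$ with the same operator norm.

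There is no real obstacle here; the only points requiring a word of care are that $C^\infty(\Omega) \subseteq L^2(\Omega)$ so that Plancherel applies to both $f$ and $Af$ (immediate from compactness of $\Omega$, already established in \cref{subsection:FT}), and that the supremum defining $\operatorNormLpLq{2}{2}{A}$ is taken over $C^\infty(\Omega)$ rather than all of $L^2(\Omega)$ at this stage — which is exactly what makes the extension statement meaningful. Both are routine.
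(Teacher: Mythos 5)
Your argument is correct and is essentially identical to the paper's own proof: both apply the Plancherel formula \eqref{Plancherel} to $Af$ and $f$, use the multiplier property \eqref{eq:FourierMultProp} to factor out $\lpnormOver{\infty}{\dualLat{L}}{\sigma}$, and conclude the bounded $L^{2}$-extension by density of $C^{\infty}(\Omega)$. Your additional remarks on the B.L.T.\ extension are routine elaborations of the paper's closing sentence ``The result now follows easily.''
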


\begin{proof}
    The Plancherel formula \eqref{Plancherel} yields
    \begin{align*}
        \LpnormOver{2}{\Omega}{Af} = \lpnormOver{2}{\dualLat{L}}{\FourierCoef{Af}} = \left(\sum_{\kappa \in \dualLat{L}} |\sigma(\kappa) \, \FourierCoef{f}(\kappa)|^{2}\right)^{\frac{1}{2}} &\leq \lpnormOver{\infty}{\dualLat{L}}{\sigma} \lpnormOver{2}{\dualLat{L}}{\FourierCoef{f}} \\ &= \lpnormOver{\infty}{\dualLat{L}}{\sigma} \LpnormOver{2}{\Omega}{f}.
    \end{align*}
    The result now follows easily.
\end{proof}

Next, we examine the case that Hörmander considered in \cite[Theorem 1.11]{Hormander1960}. Our proof is similar to that of \cite[Theorem 4.8]{CKNR}.

\begin{theorem}\label{th:boundedness1}
    Let \(1 < p \leq 2 \leq q < \infty\) with \(p\) and \(q\) not both equal to \(2\), and let \(A : C^{\infty}(\Omega) \to C^{\infty}(\Omega)\) be a Fourier multiplier with symbol \(\sigma\) satisfying
    \begin{equation}\label{symbolGrowthCondition}
        \sup_{s > 0} s \left(\sum_{\substack{\kappa \in \dualLat{L} \\  \abs{\sigma(\kappa)} \geq s}} 1 \right)^{\frac{1}{p} - \frac{1}{q}} < \infty.
    \end{equation}
    Then
    \[\operatorNorm{A} := \sup_{f \neq 0} \frac{\LpnormOver{q}{\Omega}{Af}}{\LpnormOver{p}{\Omega}{f}} \lesssim  \sup_{s > 0} s \left(\sum_{\substack{\kappa \in \dualLat{L} \\  \abs{\sigma(\kappa)} \geq s}} 1 \right)^{\frac{1}{p} - \frac{1}{q}}.\]
    In particular, \(A\) can be extended to a bounded linear operator from \(L^{p}(\Omega)\) to \(L^{q}(\Omega)\).
\end{theorem}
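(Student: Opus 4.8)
The plan is to carry out H\"ormander's classical argument: for $f \in C^{\infty}(\Omega)$ one has $\FourierCoef{Af} = \sigma \FourierCoef{f}$, so I would first bound $\LpnormOver{q}{\Omega}{Af}$ by $\lpnormOver{q'}{\dualLat{L}}{\sigma \FourierCoef{f}}$ using the inverse Hausdorff-Young inequality \eqref{eq:HYInv} (valid since $q \geq 2$, i.e.\ $q' \leq 2$), and then bound $\lpnormOver{q'}{\dualLat{L}}{\sigma \FourierCoef{f}}$ by $\LpnormOver{p}{\Omega}{f}$ using the Hausdorff-Young-Paley inequality \cref{th:HYP}. Set $\frac{1}{b} := \frac{1}{p} - \frac{1}{q}$; since $1 < p \leq 2 \leq q < \infty$ with $(p,q) \neq (2,2)$ we have $p < q$, hence $0 < \frac{1}{b} < \frac{1}{p} < 1$ and $1 < b < \infty$. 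Letting $s$ run up to any value attained by $\abs{\sigma}$ in \eqref{symbolGrowthCondition} shows $\lpnormOver{\infty}{\dualLat{L}}{\sigma} \leq \mathcal{M}$, where $\mathcal{M}$ denotes the right-hand side of the inequality to be proved; in particular $\sigma$ is bounded, so $\sigma\FourierCoef{f}$ is rapidly decaying for $f \in C^{\infty}(\Omega)$. The argument splits according to whether $\frac{1}{p} + \frac{1}{q} \geq 1$; the case $\frac{1}{p} + \frac{1}{q} < 1$ will be reduced to the other by duality.

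When $\frac{1}{p} + \frac{1}{q} \geq 1$, equivalently $p \leq q'$, I would combine this with $q' \leq 2 \leq p'$ to place $q'$ in the range $[p,p']$ to which \cref{th:HYP} applies, and then invoke \cref{th:HYP} with exponent $b$ there equal to $q'$ and with weight $\varphi(\kappa) := \abs{\sigma(\kappa)}^{b}$. The substitution $s = t^{b}$ gives
\[
    M_{\varphi} = \sup_{s > 0} s \sum_{\substack{\kappa \in \dualLat{L} \\ \abs{\sigma(\kappa)}^{b} \geq s}} 1 = \left( \sup_{t > 0} t \left( \sum_{\substack{\kappa \in \dualLat{L} \\ \abs{\sigma(\kappa)} \geq t}} 1 \right)^{1/b} \right)^{b} = \mathcal{M}^{b} < \infty,
\]
so \cref{th:HYP} is applicable; and since $\frac{1}{q'} - \frac{1}{p'} = \frac{1}{p} - \frac{1}{q} = \frac{1}{b}$, the weight appearing in \eqref{ineq:HYP} is $\varphi(\kappa)^{1/q' - 1/p'} = \abs{\sigma(\kappa)}$ and the prefactor is $M_{\varphi}^{1/q' - 1/p'} = \mathcal{M}$. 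Hence for every $f \in L^{p}(\Omega)$,
\[
    \lpnormOver{q'}{\dualLat{L}}{\sigma \FourierCoef{f}} = \left( \sum_{\kappa \in \dualLat{L}} \abs{\FourierCoef{f}(\kappa) \, \varphi(\kappa)^{\frac{1}{q'} - \frac{1}{p'}}}^{q'} \right)^{1/q'} \lesssim \mathcal{M} \, \LpnormOver{p}{\Omega}{f}.
\]
For $f \in C^{\infty}(\Omega)$, applying \eqref{eq:HYInv} to $\FourierCoef{Af} = \sigma \FourierCoef{f} \in \ell^{q'}(\dualLat{L})$, whose inverse Fourier transform is $Af$, gives $\LpnormOver{q}{\Omega}{Af} \leq \lpnormOver{q'}{\dualLat{L}}{\sigma \FourierCoef{f}} \lesssim \mathcal{M} \, \LpnormOver{p}{\Omega}{f}$. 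Since $C^{\infty}(\Omega)$ is dense in $L^{p}(\Omega)$, $A$ extends to a bounded operator with $\operatorNorm{A} \lesssim \mathcal{M}$. (If $\sigma(\kappa) = 0$ for some $\kappa$, then $\varphi$ is not strictly positive there, but such indices contribute $0$ to both sides of \eqref{ineq:HYP} because $\frac{1}{q'} - \frac{1}{p'} = \frac{1}{b} > 0$, so \cref{th:HYP} still applies, as is clear from its proof — alternatively one replaces $\dualLat{L}$ by the support of $\sigma$ throughout.)

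In the remaining case $\frac{1}{p} + \frac{1}{q} < 1$ I would pass to the adjoint. By Plancherel's identity \eqref{Plancherel} the $L^{2}(\Omega)$-adjoint $\adj{A}$ of $A$ is the Fourier multiplier with symbol $\complexConjugate{\sigma(\kappa)}$; since $\abs{\complexConjugate{\sigma(\kappa)}} = \abs{\sigma(\kappa)}$ and $\frac{1}{q'} - \frac{1}{p'} = \frac{1}{p} - \frac{1}{q}$, the symbol $\complexConjugate{\sigma}$ satisfies \eqref{symbolGrowthCondition} for the exponent pair $(q',p')$ with the same value $\mathcal{M}$. As $q' \leq 2 \leq p'$, $(q',p') \neq (2,2)$, and $\frac{1}{q'} + \frac{1}{p'} = 2 - \left( \frac{1}{p} + \frac{1}{q} \right) > 1$, the case already treated applies to $\adj{A} : L^{q'}(\Omega) \to L^{p'}(\Omega)$ and yields $\LpnormOver{p'}{\Omega}{\adj{A} g} \lesssim \mathcal{M} \, \LpnormOver{q'}{\Omega}{g}$ for $g \in C^{\infty}(\Omega)$. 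Dualising, for $f \in C^{\infty}(\Omega)$,
\[
    \LpnormOver{q}{\Omega}{Af} = \sup_{g} \abs{\innerproductOmega{Af}{g}} = \sup_{g} \abs{\innerproductOmega{f}{\adj{A} g}} \leq \LpnormOver{p}{\Omega}{f} \sup_{g} \LpnormOver{p'}{\Omega}{\adj{A} g} \lesssim \mathcal{M} \, \LpnormOver{p}{\Omega}{f},
\]
the suprema being over $g \in C^{\infty}(\Omega)$ with $\LpnormOver{q'}{\Omega}{g} \leq 1$, and the bounded extension follows by density. The one genuine obstacle is exactly this range restriction in \cref{th:HYP}: the estimate is available only for exponents in $[p,p']$, and the exponent we are forced to feed it, $q'$, lies there precisely when $\frac{1}{p} + \frac{1}{q} \geq 1$. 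What rescues the complementary range is that the family $\{\, 1 < p \leq 2 \leq q < \infty \,\}$ is stable under $(p,q) \mapsto (q',p')$, a map that replaces $\frac{1}{p} + \frac{1}{q}$ by $2 - \left( \frac{1}{p} + \frac{1}{q} \right)$ and hence flips which side of $1$ it lies on, so that for any admissible $(p,q)$ either $A$ itself or its adjoint is in the directly treatable case. Everything else — identifying the adjoint multiplier and matching the weight $\abs{\sigma}^{b}$ and exponent $q'$ to the statement of \cref{th:HYP} — is routine bookkeeping.
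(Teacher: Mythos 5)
Your proposal is correct and follows essentially the same route as the paper: for $p \leq q'$ it combines the inverse Hausdorff--Young inequality \eqref{eq:HYInv} with the Hausdorff--Young--Paley inequality \eqref{ineq:HYP} applied with exponent $q'$ and weight $\abs{\sigma}^{r}$ where $\frac{1}{r} = \frac{1}{p} - \frac{1}{q}$, and handles the complementary range by passing to the adjoint multiplier with symbol $\complexConjugate{\sigma}$. Your explicit treatment of the indices where $\sigma$ vanishes is a small point of care the paper omits, but it does not change the argument.
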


\begin{proof}
    First, we assume that \(p \leq q'\). Note that \(q' \leq 2\). Hence the Hausdorff-Young inequality \eqref{eq:HYInv} implies that
    \[\LpnormOver{q}{\Omega}{Af} = \LpnormOver{q}{\Omega}{\FTFund^{-1}(\FourierCoef{Af})} \leq \lpnormOver{q'}{\dualLat{L}}{\FourierCoef{Af}} = \left(\sum_{\kappa \in \dualLat{L}} |\sigma(\kappa) \, \FourierCoef{f}(\kappa)|^{q'}\right)^{\frac{1}{q'}}.\]
    Set \(\frac{1}{r} = \frac{1}{p} - \frac{1}{q} = \frac{1}{q'} - \frac{1}{p'}\). An application of the Hausdorff-Young-Paley inequality \eqref{ineq:HYP} with \(b := q\) and \(\varphi(\kappa) = \abs{\sigma(\kappa)}^{r}\), where we note that \(1 < p \leq q' \leq p' < \infty\), yields
    \begin{align*}
        \LpnormOver{q}{\Omega}{Af} \leq \left(\sum_{\kappa \in \dualLat{L}} |\sigma(\kappa) \, \FourierCoef{f}(\kappa)|^{q'}\right)^{\frac{1}{q'}} &= \left(\sum_{\kappa \in \dualLat{L}} |\FourierCoef{f}(\kappa) \, \varphi(\kappa)^{\frac{1}{b} - \frac{1}{p'}}|^{q'}\right)^{\frac{1}{q'}} \\
        &\lesssim \left(\sup_{s > 0} s \sum_{\substack{\kappa \in \dualLat{L} \\ \varphi(\kappa) \geq s}} 1\right)^{\frac{1}{r}} \LpnormOver{p}{\Omega}{f}.
    \end{align*}
    Hence we find
    \begin{align*}
        \operatorNorm{A} \lesssim \left(\sup_{s > 0} s \sum_{\substack{\kappa \in \dualLat{L} \\ \abs{\sigma(\kappa)}^{r} \geq s}} 1\right)^{\frac{1}{r}} &= \left(\sup_{s > 0} s^{r} \sum_{\substack{\kappa \in \dualLat{L} \\ \abs{\sigma(\kappa)} \geq s}} 1\right)^{\frac{1}{r}} \\ &= \sup_{s > 0} s \left(\sum_{\substack{\kappa \in \dualLat{L} \\ \abs{\sigma(\kappa)} \geq s}} 1\right)^{\frac{1}{r}},
    \end{align*}
    which is finite by assumption. This completes the proof for the case \(p \leq q'\).
    
    Now we consider the case \(q' \leq p\), which is equivalent to \(p' \leq q = (q')'\). It is well-known \cite[Theorem C.4.50]{PsiDOAndSymm} that the dual space of the Banach space \(L^{p}(\Omega)\) is \(L^{p'}(\Omega)\). General Banach space theory \cite[Theorem 4.10]{FARudin} tells us that the adjoint operator \(\adj{A}\) satisfies \(\adjOperatorNorm{\adj{A}} = \operatorNorm{A}\).
    
    We now prove that \(\adj{A}\) is a Fourier multiplier with symbol \(\complexConjugate{\sigma}\). A general property of an orthonormal basis of a Hilbert space is that it satisfies \emph{Parseval's identity} \cite[Theorem 4.18]{RealComplex}. Considering the orthonormal basis \(\{e^{2 \pi i \kappa \cdot x} \mid \kappa \in \dualLat{L}\}\) of \(L^{2}(\Omega)\) we get for all \(f, g \in L^{2}(\Omega)\) that
    \[\innerproductOmega{f}{g} = \sum_{\kappa \in \dualLat{L}} \FourierCoef{f}(\kappa) \, \FourierCoef{g}(\kappa).\]
    Hence, on the one hand we find for all \(f,g \in C^{\infty}(\Omega) \subseteq L^{2}(\Omega)\) that
    \[\innerproductOmega{Af}{g} = \sum_{\kappa \in \dualLat{L}} \FourierCoef{Af}(\kappa) \, \complexConjugate{\FourierCoef{g}(\kappa)} = \sum_{\kappa \in \dualLat{L}} \sigma(\kappa) \, \FourierCoef{f}(\kappa) \, \complexConjugate{\FourierCoef{g}(\kappa)} = \sum_{\kappa \in \dualLat{L}} \FourierCoef{f}(\kappa) \, \complexConjugate{\complexConjugate{\sigma(\kappa)} \, \FourierCoef{g}(\kappa)},\]
    while on the other hand we have
    \[\innerproductOmega{Af}{g} = \innerproduct{f}{\adj{A}g} = \sum_{\kappa \in \dualLat{L}} \FourierCoef{f}(\kappa) \, \complexConjugate{\FourierCoef{\adj{A}g}(\kappa)}.\]
    We can choose \(\FourierCoef{f}(\kappa) = \delta_{\kappa, \kappa_{0}}\) for any \(\kappa_{0} \in \dualLat{L}\), where \(\delta_{\kappa, \kappa_{0}}\) is a Kronecker delta, since such an \(f\) belongs to \(\schwartzSpace(\dualLat{L})\) and thus can be inverted by the Fourier transform. Hence, we find for all \(\kappa \in \dualLat{L}\) that
    \[\FourierCoef{\adj{A}g}(\kappa) = \complexConjugate{\sigma(\kappa)} \, \FourierCoef{g}(\kappa),\]
    which shows indeed that \(\adj{A}\) is a Fourier multiplier with symbol \(\complexConjugate{\sigma}\).
    
    Of course, \(\abs{\sigma(\kappa)} = |\complexConjugate{\sigma(\kappa)}|\) for all \(\kappa \in \dualLat{L}\) so that \(\complexConjugate{\sigma}\) also fulfills the growth condition \eqref{symbolGrowthCondition}. Thus, if we apply the first case to the Fourier multiplier \(\adj{A}\) with \(1 < q' \leq 2 < p' < \infty\), we obtain
    \begin{align*}
        \operatorNorm{A} = \adjOperatorNorm{\adj{A}} & \lesssim \sup_{s > 0} s \left(\sum_{\substack{\kappa \in \dualLat{L} \\ |\complexConjugate{\sigma(\kappa)}| \geq s}} 1\right)^{\frac{1}{q'}-\frac{1}{p'}} \\
        &= \sup_{s > 0} s \left(\sum_{\substack{\kappa \in \dualLat{L} \\ |\sigma(\kappa)| \geq s}} 1\right)^{\frac{1}{p}-\frac{1}{q}},
    \end{align*}
    where we note again that \(\frac{1}{p} - \frac{1}{q} = \frac{1}{q'} - \frac{1}{p'}\).
\end{proof}

The finiteness of the measure of \(\Omega\) allows us to prove the \(L^{p}\)-\(L^{q}\) boundedness of Fourier multipliers in a broader range for \(p\) and \(q\). This idea and our proof originate from \cite[Corollary 4.9]{CKNR}.

\begin{theorem}\label{th:boundedness2}
    Let \(1 < p,q < \infty\), and let \(A : C^{\infty}(\Omega) \to C^{\infty}(\Omega)\) be a Fourier multiplier with symbol \(\sigma\). Assuming that the upper bound in each case is finite, we get the following inequalities.
    \begin{enumerate}
        \item If \(1 < p,q \leq 2\), then
        \[\operatorNorm{A} \lesssim \sup_{s > 0} s \left(\sum_{\substack{\kappa \in \dualLat{L} \\  \abs{\sigma(\kappa)} \geq s}} 1 \right)^{\frac{1}{p} - \frac{1}{2}}.\]
        \item If \(2 \leq p,q < \infty\), then
        \[\operatorNorm{A} \lesssim \sup_{s > 0} s \left(\sum_{\substack{\kappa \in \dualLat{L} \\  \abs{\sigma(\kappa)} \geq s}} 1 \right)^{\frac{1}{q'} - \frac{1}{2}}.\]
    \end{enumerate}
    In particular, \(A\) can be extended to a bounded linear operator from \(L^{p}(\Omega)\) to \(L^{q}(\Omega)\).
\end{theorem}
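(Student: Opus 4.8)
The plan is to exploit the fact that, by the normalisation of the norms, \(\bigl(\Omega, \mathrm{d}x/\meas{\Omega}\bigr)\) is a probability space, so that \(\LpnormOver{a}{\Omega}{g} \leq \LpnormOver{b}{\Omega}{g}\) whenever \(1 \leq a \leq b \leq \infty\), by Jensen's (equivalently Hölder's) inequality. This monotonicity of the \(L^{p}(\Omega)\)-scale lets me reduce both cases of the theorem to the already-established endpoint results \cref{th:boundedness1,th:boundednessForPTwo} by inserting an \(L^{2}(\Omega)\)-norm in the middle.

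For part (1), assume \(1 < p, q \leq 2\). Since \(q \leq 2\), I have \(\LpnormOver{q}{\Omega}{Af} \leq \LpnormOver{2}{\Omega}{Af}\), so it suffices to bound \(\LpnormOver{2}{\Omega}{Af}\) by \(\LpnormOver{p}{\Omega}{f}\). If \(p < 2\), this is \cref{th:boundedness1} applied with the pair of exponents \((p,2)\) — which satisfies \(1 < p \leq 2 \leq 2 < \infty\) and is not the excluded pair \((2,2)\) — and the growth hypothesis needed there is exactly the assumption of part (1), yielding the factor \(\sup_{s>0} s\bigl(\sum_{\kappa \in \dualLat{L},\, \abs{\sigma(\kappa)} \geq s} 1\bigr)^{\frac{1}{p} - \frac{1}{2}}\). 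If instead \(p = 2\), then \cref{th:boundednessForPTwo} gives \(\LpnormOver{2}{\Omega}{Af} \leq \lpnormOver{\infty}{\dualLat{L}}{\sigma}\LpnormOver{2}{\Omega}{f}\), and one checks that \(\lpnormOver{\infty}{\dualLat{L}}{\sigma} = \sup_{s>0} s\bigl(\sum_{\kappa \in \dualLat{L},\, \abs{\sigma(\kappa)} \geq s} 1\bigr)^{0}\), the inner sum being nonempty precisely for \(0 < s \leq \lpnormOver{\infty}{\dualLat{L}}{\sigma}\); so the claimed bound with exponent \(\frac{1}{p} - \frac{1}{2} = 0\) holds in this case too.

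For part (2), assume \(2 \leq p, q < \infty\). Now \(p \geq 2\) gives \(\LpnormOver{2}{\Omega}{f} \leq \LpnormOver{p}{\Omega}{f}\), so it is enough to bound \(\LpnormOver{q}{\Omega}{Af}\) by \(\LpnormOver{2}{\Omega}{f}\). If \(q > 2\), this is \cref{th:boundedness1} with the pair \((2,q)\), giving the factor \(\sup_{s>0} s\bigl(\sum_{\kappa \in \dualLat{L},\, \abs{\sigma(\kappa)}\geq s}1\bigr)^{\frac{1}{2} - \frac{1}{q}}\); since \(\frac{1}{2} - \frac{1}{q} = \frac{1}{q'} - \frac{1}{2}\), this is exactly the asserted bound. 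If \(q = 2\), one argues as before via \cref{th:boundednessForPTwo} and the identity \(\lpnormOver{\infty}{\dualLat{L}}{\sigma} = \sup_{s>0}s\bigl(\sum_{\kappa \in \dualLat{L},\, \abs{\sigma(\kappa)} \geq s} 1\bigr)^{0}\).

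In all cases the passage from \(C^{\infty}(\Omega)\) to a bounded operator \(L^{p}(\Omega) \to L^{q}(\Omega)\) follows from the density of \(C^{\infty}(\Omega)\) in \(L^{p}(\Omega)\). The argument is short and presents no serious obstacle; the only points requiring care are verifying that the \(L^{p}(\Omega)\)-norms are normalised so that the embeddings \(L^{b}(\Omega) \hookrightarrow L^{a}(\Omega)\) for \(a \leq b\) hold with constant \(1\), the bookkeeping of the exponents — in particular recognising \(\frac{1}{q'} - \frac{1}{2} = \frac{1}{2} - \frac{1}{q}\) in part (2) — and handling the degenerate sub-cases \(p = 2\) in (1) and \(q = 2\) in (2), where \cref{th:boundedness1} does not apply and one falls back on \cref{th:boundednessForPTwo}.
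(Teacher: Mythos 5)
Your argument is correct, and for part (1) it coincides with the paper's: the paper also reduces \(\LpnormOver{q}{\Omega}{Af}\) to \(\LpnormOver{2}{\Omega}{Af}\) via H\"older's inequality on the finite-measure set \(\Omega\) (writing the constant as \(\meas{\Omega}^{1/\widetilde{q}}\) with \(\frac{1}{q} = \frac{1}{2} + \frac{1}{\widetilde{q}}\) rather than invoking the probability-space normalisation, but this is the same estimate) and then applies \cref{th:boundedness1} with the pair \((p,2)\). For part (2) you take a genuinely different route: you put the embedding on the domain side, using \(\LpnormOver{2}{\Omega}{f} \leq \LpnormOver{p}{\Omega}{f}\) for \(p \geq 2\) and then \cref{th:boundedness1} with the pair \((2,q)\), together with the identity \(\frac{1}{2} - \frac{1}{q} = \frac{1}{q'} - \frac{1}{2}\). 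The paper instead deduces part (2) from part (1) by duality, using that \(\adj{A}\) is a Fourier multiplier with symbol \(\complexConjugate{\sigma}\) and that \(\operatorNorm{A} = \adjOperatorNorm{\adj{A}}\). Your direct argument is arguably cleaner since it avoids re-invoking the adjoint machinery (which the paper only established inside the proof of \cref{th:boundedness1}); the paper's duality argument buys nothing extra here beyond symmetry of presentation. You are also more careful than the paper about the degenerate subcases \(p=2\) in (1) and \(q=2\) in (2), which the paper dismisses with a one-line remark; your identification \(\lpnormOver{\infty}{\dualLat{L}}{\sigma} = \sup_{s>0} s \bigl(\sum_{\kappa \in \dualLat{L},\, \abs{\sigma(\kappa)} \geq s} 1\bigr)^{0}\) is the right way to make that remark precise, provided one adopts the paper's convention that the empty sum is \(0\) and reads \(0^{0}=0\) there. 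The only small caveat is that the embedding constant is \(1\) only for the normalised measure; with the unnormalised norms the paper implicitly uses, a harmless factor \(\meas{\Omega}^{\frac{1}{q}-\frac{1}{2}}\) (resp.\ \(\meas{\Omega}^{\frac{1}{2}-\frac{1}{p}}\)) appears, which is absorbed into \(\lesssim\) exactly as in the paper.
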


\begin{proof}
    Consider first the case \(1 < p,q \leq 2\). We assume that \(q \neq 2\), because the other case is already treated in the previous theorems. Choose \(\widetilde{q} \geq 1\) such that \(\frac{1}{2} + \frac{1}{\widetilde{q}} = \frac{1}{q}\), i.e.\ \(\widetilde{q} = \frac{2q}{2-q}\). For all \(f \in C^{\infty}(\Omega)\) we have
    \[\LpnormOver{q}{\Omega}{f} \leq \LpnormOver{\widetilde{q}}{\Omega}{1} \LpnormOver{2}{\Omega}{f} = \meas{\Omega}^{\frac{1}{\widetilde{q}}} \LpnormOver{2}{\Omega}{f} = \meas{\Omega}^{\frac{2-q}{2q}} \LpnormOver{2}{\Omega}{f}\]
    by (a simple generalisation of) Hölder's inequality. Hence we find
    \[\operatorNorm{A} = \sup_{f \neq 0} \frac{\LpnormOver{q}{\Omega}{Af}}{\LpnormOver{p}{\Omega}{f}} \lesssim \sup_{f \neq 0} \frac{\LpnormOver{2}{\Omega}{Af}}{\LpnormOver{p}{\Omega}{f}} = \operatorNormLpLq{p}{2}{A}.\]
    \Cref{th:boundedness1} yields
    \[\operatorNorm{A} \lesssim \operatorNormLpLq{p}{2}{A} \lesssim \sup_{s > 0} s \left(\sum_{\substack{\kappa \in \dualLat{L} \\  \abs{\sigma(\kappa)} \geq s}} 1 \right)^{\frac{1}{p} - \frac{1}{2}}.\]
    
    For the other case, namely \(2 \leq p,q < \infty\), we have \(1 < p',q' \leq 2\) so that the first case gives that
    \[\operatorNorm{A} = \adjOperatorNorm{\adj{A}} \lesssim \sup_{s > 0} s \left(\sum_{\substack{\kappa \in \dualLat{L} \\  \abs{\sigma(\kappa)} \geq s}} 1 \right)^{\frac{1}{q'} - \frac{1}{2}},\]
    where we have used the same properties of the adjoint operator \(\adj{A}\) as in the proof of \cref{th:boundedness1}.
\end{proof}

\begin{remark}
    In fact, we can cover all cases where \(1 < p,q < \infty\). The only remaining case is \(1 < q \leq 2 \leq p < \infty\). Suppose \(A : C^{\infty}(\Omega) \to C^{\infty}(\Omega)\) is a Fourier multiplier with symbol \(\sigma\) satisfying the growth condition
    \[\sup_{s > 0} s \left(\sum_{\substack{\kappa \in \dualLat{L} \\  \abs{\sigma(\kappa)} \geq s}} 1 \right)^{\frac{1}{q} - \frac{1}{p}} < \infty.\]
    Note that in this case \(1 < p' \leq 2 \leq q' < \infty\). Since \(\frac{1}{p'} - \frac{1}{q'} = \frac{1}{p} - \frac{1}{q}\), \(\complexConjugate{\sigma}\) satisfies the growth condition \eqref{symbolGrowthCondition} so that \cref{th:boundedness1} yields \(\adj{A} \in \contLinLpLq{p'}{q'}\). Thus we obtain \(\adj{(\adj{A})} = A \in \contLinLpLq{q}{p}\).
\end{remark}

\begin{example}
    We return to the example of linear partial differential operators with constant coefficients. Note that derivatives of order at least 1 are mapped to polynomials of degree at least 1 under the Fourier transform. Since the growth condition for the symbol in the former theorems restrict the symbol to be bounded, the only such operator that can fulfil the growth condition for the symbol in the former theorems is an operator of the form \(Af(x) = a_{0} f(x)\) for some \(a_{0} \in \C\). In fact, this operator only fulfils the requirements for \(L^{2}(\Omega)\)-\(L^{2}(\Omega)\) boundedness.
    
    This is not surprising. Otherwise, there would be a canonical way to extend the partial derivatives to \(L^{p}\)-spaces, which we do not expect.
\end{example}

\begin{example}
    We now give another example where our developed theory does apply. Consider the operator \(A : C^{\infty}(\Omega) \to C^{\infty}(\Omega)\) defined by
    \[Af(x) = \sum_{\kappa \in \dualLat{L}} e^{-\abs{\kappa}^{2}} \, \FourierCoef{f}(\kappa) \, e^{2 \pi i \kappa \cdot x}.\]
    Clearly, this is a Fourier multiplier with symbol \(\sigma(\kappa) = e^{-\abs{\kappa}^{2}}\). Note that the series converges absolutely and uniformly since both \(e^{-\abs{\kappa}^{2}}\) and \(\FourierCoef{f}(\kappa)\) belong to \(\schwartzSpace(\dualLat{L})\). We can easily verify that this operator has a bounded \(L^{p}(\Omega)\)-\(L^{q}(\Omega)\) extension for all \(1 < p,q < \infty\) if we use the inequality
    \[\#\{\kappa \in \dualLat{L_{A}} : \abs{\kappa} \leq R\} \leq \frac{2R+1}{\abs{a_{1} \cdot e_{1}}} \dots \frac{2R+1}{\abs{a_{d} \cdot e_{d}}} = C_{A} (2R+1)^{d},\]
    where \(\#\) denotes the cardinality (function), \(R > 0\), \(e_{1}, \dots, e_{d}\) is the standard basis of \(\R^{d}\), \(C_{A} = \prod_{j=1}^{d} \frac{1}{\abs{a_{j} \cdot e_{j}}}\) and \(A = (a_{1} \dots a_{d})\) is the generator matrix of the lattice \(L_{A}\). We assume that none of the inner products \(a_{j} \cdot e_{j}\) is zero, and this is always possible. This inequality generalises the idea that we can bound the number of lattice points of \(\Z^{d}\) in the ball \(B(0,R)\) by the number of lattice points in \([-R,R]^{d}\), but instead of enlarging \(B(0,R)\) to a cube \([-R,R]^{d}\), we enlarge it to a parallelotope.
    
    Of course, one can find a lot of other examples, but the former inequality may come in handy when one verifies the growth condition on the symbol.
\end{example}

\section*{Acknowledgements}

I would like to thank Prof.\ Michael Ruzhansky for suggesting this research topic and for his guidance. I am also grateful to Dr.\ Vishvesh Kumar for helpful discussions.
 
\printbibliography

\end{document}